\documentclass[12pt]{article}
\usepackage{amsmath}
\usepackage{amssymb}
\usepackage{amsthm}
\usepackage{graphicx}
\usepackage{mathtools}
\usepackage{tikz}

\addtolength{\textwidth}{1.5cm}
\hoffset=-0.7cm
\newcommand{\HH}{\mathcal{H}}
\newcommand{\LL}{\mathcal{L}}
\newcommand{\IP}{\mathbb{P}}
\newcommand{\IE}{\mathbb{E}}

\newcommand{\eps}{\varepsilon}

\newcommand{\1}[1]{{\mathbf 1}_{#1}}

\newcommand{\Z}{{\mathbb Z}}
\newcommand{\N}{{\mathbb N}}
\newcommand{\R}{{\mathbb R}}

\newcommand{\TT}{{\mathcal T}}

\newcommand{\G}{{\mathcal G}}
\newcommand{\s}{{\mathcal S}}
\newcommand{\sss}{{\mathbf s}}
\newcommand{\D}{{\mathcal D}}
\newcommand{\U}{{\mathcal U}}
\newcommand{\J}{{\mathcal J}}
\newcommand{\AAA}{{\mathfrak A}}

\newcommand{\QQ}{{\mathcal Q}}

\newcommand{\M}{{\mathcal M}}
\newcommand{\A}{{\mathcal A}}

\newcommand{\PP}{{\mathcal P}}
\newcommand{\FF}{{\mathcal F}}

\newcommand{\MM}{{\mathfrak M}}
\newcommand{\dd}{{\mathfrak d}}
\newcommand{\CC}{{\mathfrak C}}

\newcommand{\hP}{{\widehat P}}
\newcommand{\tP}{{\widetilde P}}

\newcommand{\din}{{\mathop{\text{deg}_{\text{in}}}}}
\newcommand{\dout}{{\mathop{\text{deg}_{\text{out}}}}}

\newcommand{\card}{{\mathop{\text{card}}}}

\let\phi=\varphi

\newtheorem{theo}{Theorem}[section]
\newtheorem{lem}[theo]{Lemma}
\newtheorem{df}[theo]{Definition}
\newtheorem{prop}[theo]{Proposition}

\newtheorem{conj}[theo]{Conjecture}

\title{Equilibria in the Tangle}

\author{Serguei Popov$^{1,}$\thanks{corresponding author
} 
\and Olivia Saa$^{2}$ \and Paulo Finardi$^{3}$}

\begin{document}
\maketitle

{\footnotesize 
\noindent $^{~1}$Department of Statistics, Institute of Mathematics,
 Statistics and Scientific Computation, University of Campinas --
UNICAMP, rua S\'ergio Buarque de Holanda 651,
13083--859, Campinas SP, Brazil\\
\noindent e-mails: \texttt{popov@ime.unicamp.br}
 and \texttt{serguei.popov@iota.org}

\noindent $^{~2}$Department of Applied Mathematics,
 Institute of Mathematics and  Statistics,
 University of S\~ao Paulo --
USP, rua do Mat\~ao 1010, 05508--090,
 S\~ao Paulo SP, Brazil\\
\noindent e-mail: \texttt{olivia@ime.usp.br}
 and \texttt{olivia.saa@iota.org}

\noindent $^{~3}$Institute of Computing -- IC,
University of Campinas -- UNICAMP, 
av.\ Albert Einstein 1251,
13083--852, Campinas SP, Brazil\\
\noindent e-mail: \texttt{ra144809@ic.unicamp.br}

}

\begin{abstract}
 We analyse
the Tangle --- a DAG-valued stochastic process
where new vertices get attached to the graph at Poissonian
times, and the attachment's locations are chosen by means of 
random walks on that graph. 
These new vertices, also thought of as ``transactions'',
are issued by many players (which are the nodes of the network),
independently.
The main application of this model is that it is used as 
a base for the IOTA cryptocurrency system~\cite{IOTA}.
We prove existence
of ``almost symmetric'' Nash equilibria for the system where 
a part of players tries to optimize their attachment strategies.
Then, we also present simulations that show that the ``selfish''
players will nevertheless cooperate with the network by choosing
attachment strategies that are similar 
to the ``recommended'' one.
 \\[.3cm]\textbf{Keywords:} random walk, Nash equilibrium,
  directed acyclic graph, cryptocurrency, tip selection, IOTA
\\[.3cm]\textbf{AMS 2010 subject classifications:}
Primary 91A15. Secondary 60J20, 68M14.
\end{abstract}

\section{Introduction}
\label{s_intro}
In this paper we study \emph{the Tangle},
a stochastic process on the space of (rooted) Directed
Acyclic Graphs (DAGs). This process ``grows'' in time, in the 
sense that new vertices are attached to the graph 
according to a Poissonian clock, but no vertices/edges
are ever deleted.
When that clock rings, 
a new vertex appears and attaches itself to 
locations that are chosen with the help of 
certain random walks on the state of the process in the
\emph{recent past} (this is to model the network propagation delays);
these random walks therefore play the key role in the model. 

Random walks on random graphs can be thought of 
as a particular case of Random Walks in Random Environments:
here, the transition probabilities are functions
of the graph only, i.e., there are no additional 
variables, such as conductances\footnote{this refers
to the well-known relation between reversible Markov
chains and electric networks, 
see e.g.\ the classical book~\cite{DS84}}
 etc., attached to 
the vertices and/or edges of the graph.
Still, this subject is very broad, and one can find
many related works in the literature.
One can mention the internal DLA models
(e.g.\ \cite{JLS14} and references therein), random walks 
on Erd\"os-R\'enyi graphs~\cite{CFP17,JLS14},
or random walks on the preferential attachment graphs~\cite{CF07},
which most closely resembles the model of this paper.

The motivation for studying the particular model 
presented in this paper stems from the fact that it is applied
 in the IOTA cryptocurrency~\cite{IOTA,Tangle}.
 The IOTA is an ambitious project started in 2015, it 
aims to provide a globally scalable system capable 
of processing payments and storing data.
One of its distinguishing features is that it
 uses (nontrivial) DAGs as the primary ledger
for the transactions' data\footnote{we also cite~\cite{hashgraph,byteball,dagcoin,spectre}
which deal with other approaches
to using DAGs as distributed ledgers}. 
This is different from ``traditional''
cryptocurrencies such as the Bitcoin, where that data is stored
in a sequence of blocks\footnote{that is, the underlying
graph is essentially~$\Z_+$ 
(after discarding finite forks)}, 
also known as \emph{blockchain}. An important observation,
which motivates the use of more general DAGs instead of blockchains
is that the latter \emph{scale} poorly.
Indeed, it is not hard to see that the chain of blocks of finite size, 
which can only be produced at regular discrete time intervals, produces 
a throughput bottleneck and leads to high transaction fees that need 
to be paid to the miners (which is by design).
Also, when the network
is large, it is difficult for it to achieve consensus
on which blocks are ``valid'' in the situations when
 the new blocks come too frequently.
 If one wants to remove
the fees and allow the system to scale, the natural idea would thus 
be to eliminate the bottleneck and the miners.

This is, of course, easier said than done --- it raises all sorts of new questions. Where should the next block/transaction/vertex be attached? 
Who will vet the transactions for consistency and why? 
How can it be secure against possible attacks? 
How will consensus be achieved? 
These questions do not have trivial answers. 
The paper~\cite{Tangle} presented an
 idea for an architecture which could potentially 
resolve these issues. In that system, each transaction, 
represented by a vertex in the graph, would approve two 
previous transactions it selects using a particular class of random walks. 
To eliminate the 
transaction fees, it was necessary
 to first eliminate the miners --- after all, 
if one wants to design a feeless system, there cannot be a dichotomy
of ``miners'' who serve the ``simple users''. 
This bifurcation of roles
between the miners and transactors naturally leads to transaction 
fees because the miners have some kind of resource that others do not
have and they will use this monopoly power to extract rents,
in the form of transaction fees, or block rewards, or both. 
Therefore, to eliminate the fees, all the users would 
have to fend for themselves. The 
main principle of such a system
would be ``help others, and others will help you''.

You can help others by approving their transactions; others can help 
you by approving your transactions. Let us call ``tips''
 the transactions 
which do not yet have any approvals; 
all new-coming transactions are tips 
at first. The idea is that, by approving a transaction, you also 
indirectly approve all its ``predecessors''. 
It is intuitively clear that, 
to help the system progress, 
the incoming transactions must approve tips 
because this adds new information to the system. However, due to the 
network delays, it is not practical to 
\emph{impose} that this must happen 
--- how can one be sure that what one believes 
to be a tip has not already been 
approved by someone else maybe 0.1 seconds ago?

In any case, if everybody collaborates with everybody 
--- only approving recent and ``good'' 
(non-contradicting) transactions, then 
we are in a good shape. On the other hand, for someone who only cares
about themself, a natural strategy would just be to choose a couple 
of old transactions and approve them all the time without having to do 
the more cumbersome work of checking new transactions for consistency
thereby adding new information to the system. If everybody behaves in
this way, then no new transactions will be approved, and the network
will effectively come to a halt. Thus, if we want it to work, 
we need to incentivize the participants to collaborate and 
approve each other’s recent transactions.
Therefore, in some sense,
it is all about the incentives. 
Everybody wants to be helped by others, but, not everybody cares 
about helping others themselves. To resolve this without having 
to introduce monetary rewards, we could instead think of a reward 
as simply not being punished by others. 
So, we need to slightly amend
the above main principle --- it now reads: 
``Help others, and the others 
will help you; however, if you choose not to help others, others 
will not help you either''. 
When a new transaction references two 
previous transactions, it is a statement of ``I vouch for these 
transactions which have not been vouched for before, as well as all 
their predecessors, and their success is tied to my success''.
It was suggested in~\cite{Tangle} that the
Markov Chain Monte Carlo (MCMC) tip selection algorithm 
(more precisely, 
the family of tip selection algorithms)
would have these properties.

This paper mainly deals with the following question:
what if some participants of the network are trying to minimize
their \emph{costs} by adopting a behavior different from the 
``default'' one?
How will the system behave in such circumstances?
In other words, are there enough incentives for the participants
to ``behave well''?
To address these kinds of questions, we first provide general
arguments to prove existence
of ``almost symmetric'' Nash equilibria for the system,
see Section~\ref{s_Nash_theory}.
Although one can hardly access the explicit form of
these equilibria in a purely analytical way,
 simulations presented in Section~\ref{s_simulations}
 show that the ``selfish''
players will typically still choose
attachment strategies that are similar to the default one,
meaning that they would prefer \emph{cooperating} with the network 
rather than simply \emph{using} it).

Let us stress also that, in this paper, we consider 
only ``selfish'' players, i.e., those who only care about their own 
costs but still want to use the network 
in a legitimate way\footnote{i.e., want to issue valid
transactions and have them confirmed by the rest of the network}.
We do not consider the case when there are ``malicious'' 
ones, i.e., those who want to disrupt the network even
 at a cost to themselves.
We are going to treat several types of attacks against the 
network in the subsequent papers. 

This paper is organized in the following way.
In Section~\ref{s_description} we first introduce some
notations and define the objects we are working with; 
then, in Section~\ref{s_attachment} we describe the ``recommended''
algorithm of how the nodes choose where to attach 
a new transaction, and then discuss some basic properties
of it, also formulating an open problem about the asymptotic
behavior of the total number of tips. Section~\ref{s_Nash_theory}
contains the main ``theoretical'' advances of this paper. 
There, we first discuss what is a ``strategy'' that could 
be used by a selfish player, and then (Section~\ref{s_restr_str})
make some further assumptions necessary to formulate our 
main results 
(which are placed in Section~\ref{s_mainres}). Then, we prove these
results in Section~\ref{s_proofs}.
Section~\ref{s_simulations} discusses some simulation
results, mainly in the case where the selfish players try to use 
a very natural ``greedy'' attachment strategy
 (Section~\ref{s_1dim_Nash}).
In Section~\ref{s_conclusions} one will find
conclusions and some final remarks.

\section{Description of the model}
\label{s_description}
In the following we introduce 
the mathematical model 
describing the Tangle~\cite{Tangle}.

Let $\card(A)$ stand for the cardinality of (multi)set~$A$. 
Consider an oriented multigraph $\TT=(V,E)$, where~$V$ is the set 
of vertices\footnote{one can think of vertices
as transactions} and~$E$ is the multiset of edges.
For $u,v\in V$, we say that~$u$
\emph{approves}~$v$, if $(u,v)\in E$. 
 For a vertex $v\in V$, let us denote by 
\begin{align*}
 \din(v) &= \card\{e=(u_1,u_2)\in E : u_2=v\}, \\
 \dout(v) &= \card\{e=(u_1,u_2)\in E : u_1=v\}
\end{align*}
the ``incoming'' and ``outgoing'' degrees of the vertex~$v$
(counting the multiple edges).
In the following, we refer to multigraphs simply as graphs.
We use the notation~$\A(u)$ for the set of the vertices approved by~$u$.
We say that~$u\in V$ \emph{references}~$v\in V$
if there is a sequence of sites $u=x_0,x_1,\ldots,x_k=v$
such that $x_{j}\in\A(x_{j-1})$ for all $j=1,\ldots, k$,
i.e., there is a directed path from~$u$ to~$v$.
If $\din(w)=0$ (i.e., there are no edges pointing to~$w$), then
we say that~$w\in V$ is a \emph{tip}.

Let~$\G$ be the set of all directed acyclic graphs
(also known as DAGs, that is, oriented graphs 
without cycles)  $G=(V,E)$ with the following
properties (see Figure~\ref{f_DAG}).
\begin{figure}
 \centering \includegraphics{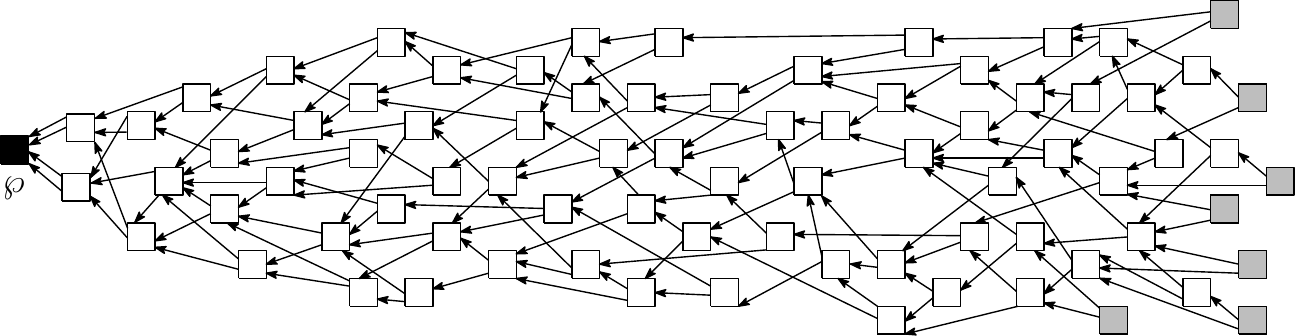} 
\caption{On the DAGs we are considering: the genesis vertex is on the 
left, and the tips are grey}
\label{f_DAG}
\end{figure}
\begin{itemize}
 \item The graph~$G$ is finite and 
the multiplicity of any edge is at most two
(i.e., there are at most two edges linking the 
 same vertices).
 \item There is a distinguished vertex $\wp\in V$ such that
$\dout(v)=2$ for all $v\in V\setminus\{\wp\}$,
and $\dout(\wp)=0$. 
This vertex $\wp$ is called \emph{the genesis}.
 \item Any $v\in V$ such that $v\neq \wp$ references~$\wp$;
that is, there is an oriented path\footnote{not necessarily
unique} from~$v$ to $\wp$.
\end{itemize}
We now describe the tangle as a continuous-time Markov process
on the space~$\G$.
The state of the tangle at time~$t\geq 0$ is a DAG
$\TT(t)=(V_{\TT}(t),E_{\TT}(t))$,
where $V_{\TT}(t)$ is the set of vertices and $E_{\TT}(t)$
is the multiset of directed edges at time~$t$.
The process's dynamics are described in the following way:
\begin{itemize}
 \item The initial state of the process is defined by
$V_{\TT}(0)=\wp$, $E_{\TT}(0)=\emptyset$.
 \item The tangle \emph{grows with time}, that is, 
$V_{\TT}(t_1)\subset V_{\TT}(t_2)$
and $E_{\TT}(t_1)\subset E_{\TT}(t_2)$ whenever $0\leq t_1<t_2$.
 \item For a fixed parameter $\lambda>0$, there is 
a Poisson process of incoming \emph{transactions};
these transactions then become the vertices of the tangle.
\item Each incoming transaction
chooses\footnote{the precise
selection mechanism
will be described below} 
two vertices~$v'$ and~$v''$ (which, in general, may coincide), 
and we add the edges $(v, v')$ and $(v,v'')$.
We say in this case that this new transaction
was \emph{attached} to~$v'$ and $v''$ 
(equivalently, $v$ \emph{approves} $v'$ and~$v''$).
\item Specifically, if a new transaction~$v$ arrived at time~$t'$,
then $V_{\TT}(t'+)=V_{\TT}(t')\cup \{v\}$, 
and $E_{\TT}(t'+)=E_{\TT}(t)\cup\{(v,v'),(v,v'')\}$.
\end{itemize}
 
Let us write
\begin{align*}
\PP^{(t)}(x) &= \big\{y\in \TT(t) : y \text{ is referenced by }x\big\},\\
 \FF^{(t)}(x) &= \big\{z\in \TT(t) : z \text{ references }x\big\}
\end{align*}
for the ``past'' and the ``future'' with respect to~$x$ (at time~$t$).
Note that these introduce
a \emph{partial
order} structure on the tangle. Observe that, if~$t_0$ is the 
time moment when~$x$ was attached to the tangle, then
$\PP^{(t)}(x)=\PP^{(t_0)}(x)$ for all~$t\geq t_0$.
We also define the \emph{cumulative weight} 
 $\HH^{(t)}_x$ of the vertex~$x$ at time~$t$ by
\begin{equation}
\label{df_cum_weight}
 \HH^{(t)}_x = 1 + \card\big(\FF^{(t)}(x)\big);
\end{equation}
that is, the cumulative weight of~$x$ is one\footnote{its ``own weight''}
plus the number of vertices that reference it.
Observe that, for any $t>0$, if $y$ approves~$x$ then
$\HH^{(t)}_x-\HH^{(t)}_y\geq 1$, and the inequality
is strict if and only if there are vertices different from~$y$
which also approve~$x$.
Also note that the cumulative weight of any tip is equal to~$1$.

There is some data 
 associated to each vertex (transaction), 
created at the moment when that transaction was attached 
to the tangle. 
 The precise nature of that data is not relevant
for the purposes of this paper, so we assume that it is
an element of some (unspecified, but finite) set~$\D$; 
what is important, however,
is that there is a natural way to say if the set of 
vertices is \emph{consistent} with respect to the data they 
contain\footnote{one may think that the data refers to value
transactions between accounts, and consistency means that 
no account has negative balance as a result,
and/or the total balance has not increased}.
When it is necessary to emphasize that the vertices
of $G\in \G$ contain some data, we consider
the \emph{marked} DAG  $G^{[\dd]}$
to be  $(G,\dd)=(V,E,\dd)$, where~$\dd$ is a function~$V\to\D$.
We define $\G^{[\dd]}$ to be the set of all marked DAGs
$(G,\dd)$, where $G\in\G$.

A note on terminology: we reserve the term ``node'' for 
entities that participate in the system by issuing transactions
(which are, by their turn, \emph{vertices} of the tangle graph).
That is, the ``players'' mentioned in Section~\ref{s_intro} 
are nodes.

\subsection{On attaching a new transaction to the Tangle}
\label{s_attachment}
There is one very important detail that has not been explained,
namely: how does a newly arrived transaction choose which two 
vertices in the tangle it will approve, i.e., what 
is the \emph{attachment strategy}? 
Notice that, in principle, it would be good\footnote{good in the 
sense described in Section~\ref{s_intro}}
for the whole system if the new transactions
always prefer to select tips as attachment places, since this 
way more transactions would be ``confirmed''\footnote{we 
discuss the exact meaning of this later; for now, think that 
``confirmed'' means ``referenced by many other transactions''}.
In any case, it is quite clear that the appropriate 
choice of the attachment strategy is essential for 
the correct functioning 
of the system, whatever this could mean.

It is also important to comment that the attachment
strategy of a network node is something ``internal''
to it; what others can see, are the \emph{attachment choices}
of the node, but the mechanism behind them need not be 
publicly known. For this reason, an attachment strategy
cannot be \emph{imposed} in the protocol. 

We now describe a possible choice of the 
attachment strategy, used to determine where the incoming 
transaction will be attached.
It is also known as the 
\emph{recommended tip selection algorithm} \cite{Tangle},
since, due to reasons described above, the recommended 
nodes' behavior is always to try to approve tips.
We stress again, however, that approving only tips
is not imposed in the protocol, since
there is usually no way to know if a node ``knew''
if the transaction it approved was already approved
by someone else before (also, there is no way 
to know which approving transaction was the first).

Let us denote by~$\LL(t)$ the set of all vertices that
are tips at time~$t$, and let $L(t)=\card(\LL(t))$.
To model the network propagation delays, we introduce a 
parameter~$h>0$, and assume that at time~$t$ only $\TT(t-h)$
is known to the entity that issued the incoming transaction.
We then define the \emph{tip-selecting random walk},
in the following way. It depends on a parameter~$q$ 
(the backtracking probability) and on a function~$f$.
 The initial state of the random walk
is the genesis~$\wp$\footnote{although in practical implementations
one may start it in some place closer to the tips},
and it is stopped upon hitting the set~$\LL(t-h)$.
It is important to observe that $v\in\LL(t-h)$
does not necessarily mean that~$v$ is still a tip at time~$t$. 
Let $f:\R_+\to\R_+$ be a monotone non-increasing function.
The transition probabilities
 of the walkers are defined in the following
 way: the walk \emph{backtracks} (i.e., jumps to a randomly chosen
site it approves) with probability~$q\in [0,1/2)$;
 if~$y$ approves~$x\neq \wp$,
then the transition probability~$P^{(f)}_{xy}$
 is proportional to $f(\HH_x-\HH_y)$,
that is,
 \begin{equation}
  \label{trans_probs}
   P^{(f)}_{xy} = 
\begin{cases}
\displaystyle
\frac{q}{\card(\A(x))}, & \text{ if } y\in\A(x),\\
\displaystyle\frac{(1-q)f(\HH^{(t-h)}_x-\HH^{(t-h)}_y)}
{\sum_{z: x \in\A(z)}
 f(\HH^{(t-h)}_x-\HH^{(t-h)}_z)}, 
& \text{ if } x\in\A(y),\\
0, & \text{ otherwise;}
\end{cases}
\end{equation}
for $x=\wp$ we define the transition probabilities as above,
but with $q=0$.
In words, the walker \emph{backtracks} (i.e., moves one step 
away from the tips) with (total) probability~$q$, and 
advances one step towards the tips with (total) probability~$(1-q)$
and relative weights as above.
Note that the fact that $q<1/2$ guarantees that the 
random walk eventually reaches a tip\footnote{more precisely,
reaches a vertex that the node \emph{assumes} 
to be a tip} almost surely. 
In what follows, we will mostly assume that 
$f(s)=\exp(-\alpha s)$ for some $\alpha\geq 0$.
We use the notation $P^{(\alpha)}$ for the transition
probabilities in this case. 
Intuitively, the smaller is the value of~$\alpha$, the more
\emph{random} the walk is\footnote{physicists would 
call the case of small~$\alpha$
\emph{high temperature regime}, and the case of large~$\alpha$
\emph{low temperature regime} 
(that is, $\alpha$ stands for the inverse temperature)}.
It is worth observing that the case $q=0$ and $\alpha\to \infty$
corresponds to the GHOST protocol of~\cite{SZ}
(more precisely, to the obvious
generalization of the GHOST protocol to the case when a 
tree is substituted by a DAG).

Now, to select two tips~$w_1$ and~$w_2$ where our transaction will be
attached, just run two independent random walks as above,
and stop when on the first hit~$\LL(t-h)$.
One can also require that~$w_1$ should be different
 from~$w_2$; for that, one may re-run the second random walk
in the case its exit point happened to be the same
as that of the first random walk.
Observe that $(\TT(t),t\geq 0)$ is a continuous-time
transient Markov process on~$\G$; since the state space is quite large,
it is difficult to analyse this process. In particular, for a fixed time~$t$,
it is not easy to study the above random walk since it takes
place on a \emph{random} graph, e.g., can be viewed as a
random walk in a random environment;
it is common knowledge that random walks in random environments
are notoriously hard to deal with.

Some motivation for choosing the attachment strategy in the above
way is provided in~\cite{Tangle}. Very briefly, it encourages
the nodes to choose \emph{recent} transactions for approval
 (since a transaction which approved a couple of old transactions,
also known as \emph{lazy tip}, is unlikely to be chosen by the 
above random walk,
due to the large difference in cumulative weights in the argument
of~$f$ in~\eqref{trans_probs})
and also gives protection against certain kinds of attacks
(e.g., the double-spending attack).

Let $\gamma_0\in (0,1)$ be some number, typically close to~$1$.
We say that a transaction is \emph{confirmed with confidence~$\gamma_0$}
 if, with probability at least~$\gamma_0$,
 the large-$\alpha$ random walk\footnote{recall that the 
large-$\alpha$ random walk is ``more deterministic''} 
ends in a tip which references that transaction.
It may happen that a 
transaction does not get confirmed (even, possible, does not
get approved a single time), 
and becomes orphaned forever.
Let us define the event
\[
 \U = \{\text{every transaction eventually gets approved}\}.
\]

We believe that the following statement holds true; 
however, we have only a
heuristical argument in its 
favor, not a rigorous proof. 
In any case, it is mostly of theoretical interest, since, as explained
below, in practice we will find ourselves in the situation where $\IP[\U]=0$.
We therefore state it as
\begin{conj}
\label{conj_integral_criterion}
 It holds that 
\begin{equation}
\label{eq_integral_criterion} 
 \IP[\U] = 
 \begin{cases}
  0, & \displaystyle\text{if }\int_0^{+\infty} f(s)\, ds  < \infty,
\vphantom{\int\limits_{A_A}}\\
  1,\vphantom{\int\limits^{B^B}} 
& \text{if }\displaystyle\int_0^{+\infty} f(s)\, ds  = \infty.
 \end{cases}
\end{equation}
\end{conj}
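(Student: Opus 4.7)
The natural strategy is a Borel--Cantelli--type dichotomy applied to individual tips. Fix a tip $y$ born at time $t_0$, attached to two vertices $v'$ and $v''$ with cumulative weights $\HH^{(t_0)}_{v'}$ and $\HH^{(t_0)}_{v''}$. For the tip-selecting random walk of a later-arriving transaction to terminate at $y$, the walk must reach $v'$ (or $v''$) and then step forward to $y$; conditional on being at $v'$, the probability of stepping to $y$ is
\begin{equation*}
  \frac{(1-q)\,f\big(\HH^{(t-h)}_{v'}-1\big)}{\sum\limits_{z:\,v'\in\A(z)} f\big(\HH^{(t-h)}_{v'}-\HH^{(t-h)}_{z}\big)},
\end{equation*}
since $\HH^{(t-h)}_{y}=1$ as long as $y$ remains unapproved. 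In a ``healthy'' regime the denominator stays bounded away from $0$ and $\infty$ uniformly in $t$ (the typical approver $z$ of $v'$ has $\HH_{v'}-\HH_{z}=O(1)$), whereas $\HH^{(t)}_{v'}$ grows linearly in $t-t_0$ at rate $\Theta(\lambda)$. Combined with a uniform lower bound on the probability that the walk ever reaches $v'$, the chance that the arrival at time $t$ approves $y$ is of order $f(\kappa(t-t_0))$ for some random but typical constant $\kappa>0$.

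Writing $p_i$ for the (random) probability that the $i$th arrival after $t_0$ approves $y$, the conditional probability that $y$ is never approved, given the entire sequence $(p_i)$, equals $\prod_i (1-p_i)$. Under the heuristic just described, $\sum_i p_i$ is of the order of $\int_0^\infty f(\kappa s)\,ds$. When $\int_0^\infty f<\infty$ this sum is finite with positive probability, so the product is strictly positive and the tip $y$ is orphaned with positive probability; applying this to infinitely many such candidate tips arising over time and using a tail zero--one argument yields $\IP[\U]=0$. Conversely, when $\int_0^\infty f=\infty$, standard properties of the Poisson arrival process give $\sum_i p_i=\infty$ almost surely, the product vanishes, every tip is approved, and $\IP[\U]=1$.

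The main obstacle is making the ``healthy graph'' picture rigorous: one needs uniform-in-time upper and lower bounds on the denominator $\sum_z f(\HH_{v'}-\HH_z)$, on the probability that the walk reaches $v'$ from the genesis, and on the linear growth rate of $\HH^{(t)}_{v'}$. All three quantities reflect the large-time behavior of the entire tangle --- in particular the asymptotics of the tip count $L(t)$, raised as an open problem in Section~\ref{s_attachment}. A fully rigorous proof would plausibly require first establishing a stationarity or law-of-large-numbers statement for the local environment around a typical vertex, then coupling the neighborhood of $y$ to an autonomous reference process on which the Borel--Cantelli computation can be performed without circularity.
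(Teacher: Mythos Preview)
Your plan is essentially the same heuristic the paper offers (note that the statement is labeled a \emph{Conjecture}, and the paper's ``proof'' is explicitly only an \emph{Explanation}, not a rigorous argument). Both of you fix a tip, argue that the chance it gets approved by an arrival at time~$t$ is of order $f(O(t))$ because the cumulative weight of its parent grows linearly, and then invoke a Borel--Cantelli dichotomy on $\sum_n f(n)\asymp\int_0^\infty f$; both of you flag the same missing ingredients (the 0--1 law, uniform control on the walk's chance of reaching the parent, and the linear growth of cumulative weights) as the reason the argument is not yet a proof.
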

\begin{proof}[Explanation.]
First of all, it should be true that $\IP[\U]\in\{0,1\}$
since~$\U$ is a \emph{tail event} with respect to the natural
filtration; however, it does not seem to be very easy
to prove the 0--1 law in this context --
recall that we are dealing with a transient Markov process
on an infinite state space.
 Next, consider a tip~$v_0$ which got attached to the tangle
at time~$t_0$, and assume that it is still a tip 
at time $t\gg t_0$; also, assume that, among all tips, $v_0$
is ``closest'', in some suitable sense, to the genesis.
\begin{figure}
 \centering \includegraphics[width=\textwidth]{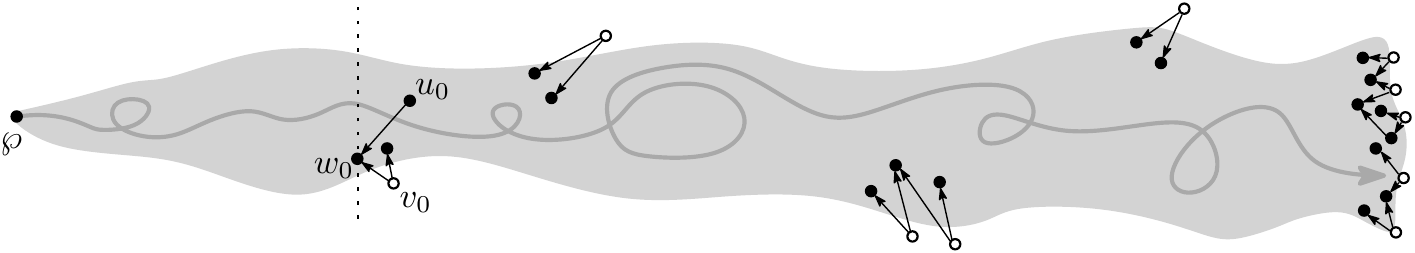} 
\caption{The walk on the tangle and tip selection. Tips are circles,
and transactions which were approved at least once are disks.}
\label{f_tip_walk}
\end{figure}
Let us now think of the following question: what is the 
probability that~$v_0$ will still be a tip at time~$t+1$?

Look at Figure~\ref{f_tip_walk}: during the time interval $[t,t+1)$,
$O(1)$ new particles will arrive, and the corresponding
walks will travel from the genesis~$\wp$ looking for tips.
Each of these walks will have to cross 
the dotted vertical segment on the picture,
and
with positive probability at least 
one of them will pass through~$w_0$, 
one of the vertices approved by~$v_0$.
Assume that~$w_0$ was already confirmed,
i.e., it is connected to the right end of the tangle
via some other transaction~$u_0$ that approves~$w_0$.
Then,  it is clear (but not easy to prove!) that 
the cumulative weight of both~$u_0$ and~$w_0$ should be~$O(t)$,
and so, when in~$w_0$, the walk will jump to the tip~$v_0$
with probability $f(O(t))$.

This suggests that the probability
that~$v_0\in\LL(t+1)$ (i.e., that~$v_0$ still is tip at time~$t+1$)
is $f(O(t))$,
and the Borel-Cantelli lemma\footnote{to be precise, 
a bit more refined argument is needed since 
the corresponding events are not independent} 
gives that the probability 
that~$v_0$ will be eventually approved is less than~$1$ 
or equal to~$1$
depending on whether $\sum_n f(n)$ converges 
or diverges; the convergence (divergence) of the sum
 is equivalent to convergence (divergence) of the integral
in~\eqref{eq_integral_criterion} due to the monotonicity of
the function~$f$. A standard probabilistic argument\footnote{which
is also not so easy to formalize in these circumstances} 
would then imply that if the probability that \emph{a given} tip
remains orphaned forever is uniformly positive, then the probability
that \emph{at least one} tip
remains orphaned forever is equal to~$1$.
\end{proof}

One may naturally think that it would be better to choose
the function~$f$ in such a way that, almost surely, every
tip eventually gets confirmed.
However,
as explained in Section~4.1 of~\cite{Tangle}, there is 
a good reason to choose a rapidly decreasing function~$f$,
because this defends the system against nodes' misbehavior 
and attacks. The idea is then to assume that a transaction
which did not get confirmed during a sufficiently long period
of time is ``unlucky'', and needs to be reattached\footnote{in fact,
the nodes of the network may adopt a rule that instructs
to delete the transactions that are older than~$K$ and 
still are tips from their databases} to the tangle.
Let us fix some~$K>0$: it stands for the time 
when an unlucky transaction
is reissued (because there is already very little hope
that it would be confirmed ``naturally'').
We call a transaction issued less than~$K$ time units ago
``unconfirmed'', and if a transaction was issued more
than~$K$ time units ago and was not confirmed, we call 
it ``orphaned''.
In the following, we assume that the system is \emph{stable},
in the sense that the ``recent'' unconfirmed transactions
do not accumulate and the time until a transaction is confirmed
 does not depend a lot on the moment when it appeared 
in the system\footnote{simulations 
indicate that this is indeed
the case when~$\alpha$ is small (cf.\ a recent 
paper~\cite{KG18}); however, it 
 is not guaranteed to happen for large values of~$\alpha$}.
We prefer not to elaborate on the exact mathematical definition
of stability here, since it requires considering a certain
compactification of the space of DAGs (which essentially 
amounts to considering DAGs with ``genesis at minus infinity''),  
but, hopefully, the idea is intuitively clear anyway.

In that stable regime, let~$p$ be the probability that a transaction
is confirmed~$K$ time units after it was issued
for the first time;
 the number of times a transaction should be issued
to achieve confirmation is then a Geometric
random variable with parameter~$p$ (and, therefore, with
expected value~$p^{-1}$); so, the mean time until the 
transaction is confirmed is $K/p$.
Let us then recall the
following remarkable fact belonging to the queuing theory, 
known as the Little's formula 
(sometimes also referred to as the Little's theorem
or the Little's identity): 
\begin{prop}
\label{prop_Little}
Suppose that $\lambda_a$ is
the arrival rate, $\mu$ is the mean number of customers in the system,
and~$T$ is the mean time a customer spends in the system.
Then $T=\mu/\lambda_a$.
\end{prop}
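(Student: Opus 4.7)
The plan is to use the classical sample-path argument for Little's formula based on computing the total ``customer-time'' accumulated in a long time window in two different ways. Fix a large time horizon $t>0$, let $N(s)$ be the number of customers present in the system at time $s$, let $A(t)$ be the number of arrivals during $[0,t]$, and for each customer~$i$ let $T_i$ be the total time~$i$ spends in the system. The mean~$\mu$ and the mean time~$T$ are interpreted as long-run time averages (which, under the stability assumption invoked in the paper, exist and coincide with the corresponding stationary expectations).

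First I would establish the pathwise identity
\begin{equation*}
\int_0^t N(s)\, ds \;=\; \sum_{i} \text{(time customer $i$ spends in the system within $[0,t]$)},
\end{equation*}
which is just Fubini applied to the indicator that customer~$i$ is present at time~$s$. Up to boundary corrections accounting for customers already in the system at time~$0$ and customers still present at time~$t$, the right-hand side equals $\sum_{i=1}^{A(t)} T_i$.

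Next I would divide by~$t$ and take $t\to\infty$. The left-hand side tends to $\mu$ by definition. The right-hand side factors as
\begin{equation*}
\frac{A(t)}{t}\cdot\frac{1}{A(t)}\sum_{i=1}^{A(t)} T_i,
\end{equation*}
whose first factor converges to the arrival rate~$\lambda_a$ and whose second factor converges to the mean sojourn time~$T$, yielding $\mu=\lambda_a T$, as claimed.

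The main obstacle is controlling the boundary terms: customers present at time~$0$ whose departures occur after~$0$, and customers that arrive during $[0,t]$ but have not yet left by time~$t$. One has to argue that the sum of their partial sojourn contributions is $o(t)$ almost surely (or in expectation), so that it disappears after dividing by~$t$. Under the stability hypothesis this is immediate: those contributions are bounded by $tN(0)+\sum_{i : i\text{ present at }t}T_i$, and because the steady-state number in the system has finite mean, both boundary terms are $O(1)$ on average and thus negligible compared with~$t$. This step is where one uses, in an essential way, that the system is in (or converges to) a stationary regime; without it the limits above need not exist and the identity need not hold.
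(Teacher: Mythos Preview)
Your argument is correct and is essentially the same as the paper's: the paper phrases the computation as ``each customer pays money at rate~$1$ while in the system'', so that the total money earned by time~$t$ is approximately $\mu t$ on one hand and $T\lambda_a t$ on the other; your integral $\int_0^t N(s)\,ds$ is exactly this total money, and your Fubini step is the paper's ``on the other hand''. The paper leaves the boundary issues implicit in the word ``approximately'' and defers the rigorous version to the cited textbook, whereas you spell out the $o(t)$ control explicitly.
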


\begin{proof} 
 See e.g.\ Section~5.2 of~\cite{C81}. To understand
intuitively why this fact holds true,
 one may reason in the following way: 
assume that, while in the system, each customer pays money 
to the system with rate~$1$. Then,
at large time~$t$, the total amount of money earned 
by the system would be (approximately)~$\mu t$ on one hand,
and~$T\lambda_a t$ on the other hand. 
Dividing by~$t$ and then sending~$t$ to infinity,
we obtain $\mu = T\lambda_a$.
\end{proof}

Little's formula then implies\footnote{in the language
of queuing systems, a reissued transaction is a customer
which goes back to the server after an unsuccessful 
service attempt} the following
(recall that~$\lambda$ is the rate of the incoming transactions flow,
not counting reattachments)
\begin{prop}
\label{prop_mean_unconf}
The average number
of unconfirmed transactions\footnote{we regard all reattachments as a single
trasaction, and if one of the reattachments is confirmed, the transaction
is considered confirmed}
in the system is equal to $p^{-1}\lambda K$.
\end{prop}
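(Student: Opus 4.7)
The plan is to obtain the statement as a direct application of Little's formula (Proposition~\ref{prop_Little}), with a careful identification of the ``customers'', the arrival rate~$\lambda_a$, and the mean residence time~$T$ of the queueing system described in the footnote to the proposition.

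First I would set up the queueing system. A \emph{customer} is an ``unconfirmed transaction'' in the sense of the footnote: an original transaction together with all its future reattachments, viewed as a single entity. Such a customer enters the system at the moment the corresponding original transaction is first issued, and exits at the moment one of its (re)issuances becomes confirmed. In particular, a reattachment does not create a new customer, it just keeps an existing one in the system. Hence the arrival rate of customers equals~$\lambda$, the rate of genuinely new incoming transactions (as defined just before the proposition), so $\lambda_a=\lambda$.

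Next, I would compute the mean residence time~$T$. By the stability assumption stated just before Proposition~\ref{prop_Little} and the definition of~$p$, each attempt has probability~$p$ of being confirmed within~$K$ time units of its issuance, and if it is not confirmed by time~$K$ after issuance it is reattached; the attempts are moreover treated as i.i.d.\ trials. Consequently, the number~$N$ of issuances a given customer needs in order to be confirmed is Geometric with parameter~$p$, and $\IE N = p^{-1}$. Since each unsuccessful attempt contributes exactly~$K$ units to the residence time and the successful one contributes at most~$K$, the discussion preceding the proposition yields $T = K/p$. Plugging $\lambda_a=\lambda$ and $T=K/p$ into Proposition~\ref{prop_Little} gives $\mu = \lambda_a T = p^{-1}\lambda K$, which is exactly the claimed formula.

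The main subtlety — and the step one has to be honest about — is justifying the use of Little's formula in this non-trivial random environment: it requires that the process of ``unconfirmed transactions'' admit a well-defined long-run average, i.e., is stable in the sense informally described before the proposition. This is exactly the stability hypothesis postulated there, so at the level of rigor of the surrounding discussion the argument is complete; a fully rigorous version would need the compactification of the DAG space alluded to earlier, which the paper explicitly chooses not to develop.
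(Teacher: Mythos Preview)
Your argument is correct and follows essentially the same route as the paper: identify a reattached transaction with the original customer so that $\lambda_a=\lambda$, take $T=K/p$ from the geometric argument already given before the proposition, and apply Proposition~\ref{prop_Little}. Your write-up is simply a more detailed version of the paper's terse proof, including the honest caveat about the stability hypothesis.
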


\begin{proof} 
Indeed, apply Proposition~\ref{prop_Little} with
$\lambda_a=\lambda$ (think of a transaction which was 
reattached as a customer which returns to the server 
after an insuccessful service attempt; this way, the 
incoming flow of customers still has rate~$\lambda$).
As observed before, the mean time spent by a customer
in the system is equal to~$K/p$.
\end{proof}

When the tangle contains data, this, in principle, can 
make transactions incompatible between each other.
 In this case one may choose
more sophisticated methods of tip selection.
As we already mentioned\footnote{recall the discussion around
 $f(s)=\exp(-\alpha s)$ right after~\eqref{trans_probs}}, 
selecting tips with larger values
of~$\alpha$ provides better defense against attacks and misbehavior;
however, smaller values of~$\alpha$ make the system more stable
with respect to the transactions' confirmation times.
An example of ``mixed-$\alpha$'' strategy is the following. 
Define the ``model tip'' $w_0$ as a result of the random 
walk with large~$\alpha$, then select two tips~$w_1$ and~$w_2$
 with random walks with small~$\alpha$,
but check that 
\[
 \PP^{(t-h)}(w_0)\cup\PP^{(t-h)}(w_1)\cup\PP^{(t-h)}(w_2)
\]
is consistent.

\section{Selfish nodes and Nash equilibria}
\label{s_Nash_theory}
Now, we are going to study the situation when 
some participants of the network are ``selfish''
and want to use a customized attachment strategy,
in order to improve the confirmation time of their
transactions (possibly at the expense of the others).

For a finite set $A$ let us denote by $\M(A)$ the set 
of all probability measures on~$A$,
that is
\[
 \M(A) = \Big\{\mu: A\to\R \text{ such that }\mu(a)\geq 0
  \text{ for all } a\in A \text{ and } \sum_{a\in A}\mu(a)=1\Big\}.
\]
Let 
\[
 \MM = \bigcup_{G=(V,E)\in \G} \M(V\times V)
\]
be the union of the sets of all probability measures
on the pairs of (not necessarily distinct) vertices
of DAGs belonging to~$\G$.
Then, a \emph{general mixed attachment strategy}~$\s$ is a map 
\begin{equation}
\label{gen_att_str}
 \s : \G^{[\dd]} \to \MM
\end{equation}
with the property $\s(V,E,\dd)\in\M(V\times V)$
for any $G^{[\dd]}=(V,E,\dd)\in \G^{[\dd]}$;
that is, for any $G\in \G$ with data attached to the vertices
(which corresponds to the state of the tangle at a given time)
there is a corresponding probability measure on the 
set of pairs of the vertices. 
Note also that in the above we considered \emph{ordered}
pairs of vertices, which, of course, does not restrict 
the generality.

Let $\kappa>0$ be a fixed number.
We now assume that, for a large~$N$, there
are~$\kappa N$ nodes that follow the default tip selection
algorithm, and~$N$ ``selfish'' nodes that 
try to minimize
their ``cost'', whatever it could mean\footnote{for example,
the cost may be the expected confirmation time of a transaction
(conditioned that it is eventually confirmed), the probability
that it was not approved during certain (fixed) time interval, etc.;
below in~\eqref{df_mean_cost} we provide the exact definition
of the cost function we are working with in this paper}.
Assume that all nodes issue transactions with the 
same rate $\frac{\lambda}{(\kappa+1)N}$, independently.
The overall rate of ``honest'' transactions in the system
is then equal to~$\frac{\lambda\kappa}{\kappa+1}$,
and the overall rate of transactions issued
by selfish nodes equals~$\frac{\lambda}{\kappa+1}$.
We also justify the assumption that the number of selfish
nodes is large by observing that
\begin{itemize}
 \item a small number of nodes that do not want to disrupt
the system but just want to obtain some advantages for themselves
(like e.g.\ faster confirmations times) are unlikely to ``globally''
influence the system in any considerable way, even if they do
obtain those advantages for themselves;
 \item however, when it becomes known that it is 
possible to obtain advantages by deviating from the ``recommended''
behavior, it is reasonable to expect that a large number
of independent entities would try to do it.
\end{itemize}

\subsection{Some further assumptions and definitions}
\label{s_restr_str}
Let us now recall that, in practice, the nodes are 
computers running a specialized software, so they are selecting 
the places to attach their transactions in some algorithmic way,
using limited physical resourses. 
In such situation, it is unrealistic to assume 
that a general strategy as in~\eqref{gen_att_str} could be
implemented ``directly'', 
since the space~$\G^{[\dd]}$ is infinite;
for the same reason, even working with \emph{simple} 
attachment strategies (which are maps that take an
element of~$\G^{[\dd]}$ as an input and produce a
\emph{deterministic} pair of its 
vertices as an output) is unrealistic.

Therefore, it looks like a good idea to \emph{restrict}
 the strategy space we are working with.
First,
we consider the following simplifying assumption
(which is, by the way, also quite reasonable, since, in practice,
one would hardly use the genesis as the starting vertex for
 the random walks due to runtime issues):

\medskip
\noindent
\textbf{Assumption L.} There is $n_1>0$ such that the attachment
strategies of all nodes (including those that 
use the default attachment strategy) only depend on the restriction 
of the tangle to the last~$n_1$ transactions that they see.

\medskip 

Observe that, under the above assumption, the set of 
all such strategies can be thought of as a compact convex 
subset of~$\R^d$, where $d=d(n_1)$ is sufficiently large.

In this section we use a different approach to model
the network propagation delays: instead of assuming that 
an incoming transaction does not have information about the 
state of the tangle during last~$h$ units of time, we rather 
assume that it does not have information about the last~$n_0$
transactions attached to the tangle, where~$n_0<n_1$ is some fixed 
positive number (so, effectively, the strategies would depend
on subgraphs induced by $n_1-n_0$ transactions, although the 
results of this section do not rely on this assumption). 
Clearly, these two approaches are quite 
similar in spirit; however, the second one permits us to avoid
certain technical difficulties related to randomness 
of the number of unseen transactions in the first 
case. Also, it will be more natural and convenient 
to pass from continuous to discrete time.

Now, even with the restrictions as above, it is still 
unrealistic to work with the simple strategies of the sort
``choose a fixed pair of transactions for each possible
restriction of the tangle to the set of last~$n_1$
transactions'', because implementing it in practive 
would require effectively dealing with sets indexed by all 
possible restrictions, and the size of the latter set clearly
grows exponentially in~$n_1$. Instead, as hinted in
the beginning of this subsection, we think of different
``attachment methods'' as simple strategies. Formally, 
let $\G^{[\dd]}_{n_1}$ be the set
 of all possible sub-DAGs
of $\G^{[\dd]}$ with~$n_1$ vertices, and~$\MM_{n_1}$
be the set of all probability measures on the vertices' pairs
of elements of~$\G^{[\dd]}_{n_1}$. 
Clearly,
the set~$\G^{[\dd]}_{n_1}$ is finite.
An \emph{attachment method} is then a map 
\[
 \J : \G^{[\dd]}_{n_1} \to \MM_{n_1};
\]
it is thought of as a (randomized)
 polynomial-time polynomial-memory
algorithm
 which takes
the last~$n_1$ transactions and returns a pair of 
those transactions which would serve as attachment's locations.
Then, the 
available simple strategies are attachment methods
\[
 \{ \J_\beta, \beta\in \AAA\},
\]
where $\AAA$ is some (unspecified) index set. 
It is also important to observe that this approach
does not restricts generality.
We then denote by~$\QQ$ the set of all \emph{mixed}
strategies of the form $\J_\Xi$, where~$\Xi$ is a random
variable on~$\AAA$.
Observe also that the set of simple strategies 
can be thought of as a  
subset of~$\R^d$ (which we assume also
to be compact), where $d=d(n_1)$ is sufficiently large,
and~$\QQ$ would be then its convex hull.

Let $\s_1,\ldots,\s_{N}\in\QQ$ be the attachment
strategies used by the selfish nodes.
To evaluate the ``goodness'' of a strategy, one has to choose
and then optimize some suitable observable (that 
stands for the ``cost''); 
as usual, there are
several ``reasonable'' ways to do this. We decided to choose
the following one, for definiteness and
 also for technical reasons (to guarantee
the continuity of a certain function used below); one can probably
extend our arguments to other reasonable cost functions. 
Assume that a transaction~$v$ was attached to the tangle 
at time~$t_v$, so $v\in V_\TT(t)$ for all $t\geq t_v$.
Fix some (typically large) $M_0\in \N$. 
Let $t^{(v)}_1,\ldots,t^{(v)}_{M_0}$
be the moments when the subsequent~$M_0$ (after~$v$) transactions were
attached to the tangle.
For $k=1\ldots,M_0$ 
 let~$R^{(v)}_k$ be the event
that the \emph{default} tip-selecting walk\footnote{i.e., the one
used by nodes following the default attachment strategy} 
on~$\TT\left(t^{(v)}_k\right)$ stops in a tip that \emph{does not} reference~$v$. We then define the random variable
\begin{equation}
\label{df_W^v}
  W(v) = \1{R^{(v)}_1} + \cdots + \1{R^{(v)}_{M_0}}
\end{equation}
to be the number of times that the~$M_0$ ``subsequent''
tip selection random walks do not reference~$v$
(in the above, $\1{A}$ is the indicator function of an event~$A$).
Intuitively, the smaller is the value of~$W(v)/M_0$,
the bigger is the chance that~$v$ is quickly confirmed. 

Next, assume that $(v^{(k)}_j, j\geq 1)$ are the transactions
issued by the $k$th (selfish) node.
We define
\begin{equation}
\label{df_mean_cost}
  \CC^{(k)}(\s_1,\ldots,\s_{N}) 
= M_0^{-1}\lim_{n\to\infty} \frac{W(v^{(k)}_1)+\cdots+W(v^{(k)}_n)}{n}, 
\end{equation}
to be the \emph{mean cost} of the $k$th node 
given that $\s_1,\ldots,\s_{N}$ are the attachment
strategies of the selfish nodes. 

\begin{df}
\label{df_Nash_eq}
We say that a set of strategies $(\s_1,\ldots,\s_{N})\in \QQ^N$
 is a \emph{Nash equilibrium} if 
\[
 \CC^{(k)}(\s_1,\ldots,\s_{k-1},\s_k,\s_{k+1},\ldots,\s_{N})
 \leq 
\CC^{(k)}(\s_1,\ldots,\s_{k-1},\s',\s_{k+1},\ldots,\s_{N}) 
\]
for any~$k$ and any $\s'\in \QQ$.
\end{df}
Observe that, since the nodes are indistinguishable, 
the fact that $(\s_1,\ldots,\s_{N})$ is a Nash equilibrium
implies that so is $(\s_{\sigma_1},\ldots,\s_{\sigma(N)})$
for any permutation~$\sigma$.

\subsection{Main results}
\label{s_mainres}
 From now on, we assume that vertices contain no data,
i.e., the set~$\D$ is empty; this is not absolutely
necessary because, with the data, the proof will be
essentially the same;
however, the notations would become much more cumbersome.
Also, there will be no reattachments;
again, this would unnecessarily complicate the proofs
(one would have to work with \emph{decorated} Poisson processes).
In fact, we are dealing with a so-called \emph{random-turn game}
here, see e.g.\ Chapter~9 of~\cite{KP17} for other examples.

Consider, for the moment, the situation when all nodes
use the same attachment strategy (i.e., there are
no selfish nodes).
The restriction of the tangle on the last~$n_1$
transactions then becomes 
a Markov chain on the state space~$\G_{n_1}$.
We now make the following technical assumption on that 
Markov chain:

\medskip
\noindent
\textbf{Assumption D.} The above Markov chain 
is irreducible and aperiodic.

\medskip 

It is important to observe that Assumption~D is \emph{not}
guaranteed to hold for \emph{every} natural attachment strategy;
however, still, this is not a very restrictive assumption in
practice because every finite Markov chain may be turned 
into an irreducible and aperiodic one by an arbitrarily
small perturbation of the transition matrix.

Then, we are able to prove the following
\begin{theo}
\label{t_Nash_exists} 
Under Assumptions~L and~D, 
the system has at least one Nash equilibrium.
\end{theo}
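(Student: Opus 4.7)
The plan is to recognize the problem as an $N$-player non-cooperative game whose pure strategy set is the compact collection $\{\J_\beta:\beta\in\AAA\}$ of simple attachment methods, and to apply the Glicksberg/Debreu--Fan extension of Nash's theorem: in a continuous game whose strategy spaces are nonempty compact convex subsets of a Hausdorff topological vector space and whose payoffs are continuous and quasi-convex in each player's own coordinate (we are minimizing the cost), a pure Nash equilibrium in these (possibly mixed) strategies exists. Assumption~L together with the compactness hypothesis on the simple strategies in $\R^d$ makes $\QQ$ --- their convex hull, or equivalently the set of mixed strategies $\J_\Xi$ --- a compact convex subset of $\R^d$. Under the standard ``each node samples one pure strategy'' convention, the cost on $\QQ^N$ is
\[
 \CC^{(k)}(\s_1,\ldots,\s_N)
 = \IE_{\Xi_1\sim\s_1,\ldots,\Xi_N\sim\s_N}\bigl[\CC^{(k),\mathrm{pure}}(\J_{\Xi_1},\ldots,\J_{\Xi_N})\bigr],
\]
which is affine, and in particular quasi-convex, in each $\s_k$ with the other coordinates held fixed. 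The whole task therefore reduces to proving continuity of $\CC^{(k),\mathrm{pure}}$ on the pure-strategy product space.

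For that I would introduce an auxiliary discrete-time Markov chain whose state $\xi$ is (the isomorphism class of) the sub-DAG induced by the last $n_1$ transactions; by Assumption~L this is all the information the attachment strategies ever use. Because default and selfish nodes issue transactions according to independent Poisson clocks with fixed rates, every new transaction is, independently of the past, default with probability $\frac{\kappa}{\kappa+1}$ and from selfish node $k$ with probability $\frac{1}{(\kappa+1)N}$, so the one-step kernel under a pure profile $\vec\beta=(\beta_1,\ldots,\beta_N)$ is
\[
 P_{\vec\beta} = \frac{\kappa}{\kappa+1}P_{\mathrm{default}} + \frac{1}{(\kappa+1)N}\sum_{k=1}^N P_{\J_{\beta_k}},
\]
whose entries depend continuously on $\vec\beta$ through the embedding $\AAA\subset\R^d$. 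By Assumption~D, $P_{\mathrm{default}}$ is irreducible and aperiodic, and since it enters the mixture with a fixed positive weight, $P_{\vec\beta}$ inherits both properties uniformly on $\AAA^N$; standard matrix-perturbation facts then yield a unique stationary distribution $\pi_{\vec\beta}$ that is continuous in $\vec\beta$. Augmenting the state with a finite collection of ``reference flags'' tracking whether the $M_0$ most recent transactions are ancestors of each current tip, one can represent $M_0^{-1}\sum_{i=1}^{M_0}\1{R^{(v)}_i}$ as $\psi(\xi)$ for a fixed, $\vec\beta$-independent function $\psi$; the finite-state ergodic theorem then identifies the limit in~\eqref{df_mean_cost} with $\sum_\xi \pi_{\vec\beta}(\xi)\psi(\xi)$, which is continuous in $\vec\beta$. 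Dominated convergence transfers continuity to the mixed extension on $\QQ^N$, and the Debreu--Glicksberg--Fan theorem concludes the proof.

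The main obstacle, in my view, is precisely the augmentation step: one must engineer a finite enlargement of $\G_{n_1}$ that (a) encodes enough history to read off the indicators $\1{R^{(v)}_i}$, even though a default walk launched at step $i$ can in principle terminate at a tip whose ancestry extends well before the current window and thus reach $v$ along paths invisible from $\xi$ alone; (b) preserves the irreducibility and aperiodicity inherited from $P_{\mathrm{default}}$; and (c) keeps the transition kernel continuous in the strategies. Once this enlarged state space is in place, the remainder of the argument --- continuity of stationary distributions for finite irreducible chains, the ergodic theorem for such chains, and the Debreu--Glicksberg--Fan fixed-point theorem --- is textbook.
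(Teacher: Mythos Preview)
Your outline is essentially the paper's own: reduce existence to continuity of the cost, obtain continuity of the stationary measure of the windowed Markov chain from Assumption~D (this is exactly the paper's Lemma~\ref{l_continuous}, proved there via Kac's formula and a uniform exponential tail bound), and then invoke a fixed-point theorem. The paper finishes with Kakutani's theorem together with Berge's Maximum Theorem; your Debreu--Glicksberg--Fan is an equivalent packaging in this finite-dimensional setting.

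Where you diverge from the paper is in your treatment of the cost, and this is precisely where you manufacture your ``main obstacle''. The paper does \emph{not} augment the state space. It simply writes
\[
 \CC^{(k)}(\sss)=\IE^{\s_k,\s'}_{\pi_{\s'}} W^{(k)},
\]
i.e., sample the current $n_1$-window from~$\pi_{\s'}$, attach the focal transaction according to~$\s_k$, run $M_0$ further steps of the averaged kernel, and take the expectation of~$W^{(k)}$. This is a finite-horizon functional, hence a polynomial in the entries of~$\pi_{\s'}$ and of the one-step kernels; continuity follows at once from the continuity of~$\pi_{\s'}$. No reference flags, no enlarged chain, no separate irreducibility check on an augmented space.

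Your specific worry in~(a) --- that a tip reached by the walk might reference~$v$ along a path that exits the $n_1$-window --- is in fact empty. In this DAG every edge points from a later vertex to an earlier one, so any directed path from a tip~$w$ to~$v$ visits only vertices whose times lie between those of~$v$ and~$w$. Thus, provided $v$ is still among the last~$n_1$ transactions when the $M_0$th subsequent transaction arrives (which is exactly the regime implicit in the paper's setup), the event $R^{(v)}_i$ is already measurable with respect to the windowed trajectory. So your augmentation programme, while not wrong, is an unnecessary detour; the direct finite-horizon expectation is both simpler and what the paper actually does.
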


Symmetric games do not always have symmetric Nash equilibria,
as shown in~\cite{Fey12}. Also, even when such equilibria
exist in the class of mixed strategies, they may be
``inferior'' to asymmetric pure equilibria;
for example, this happens in the classical ``Battle of the sexes''
game (see e.g.\ Section~7.2 of~\cite{KP17}). 


Now, the goal is to prove that, if the number of selfish
nodes~$N$ is large, then for \emph{any} 
equilibrium state the costs of distinct nodes cannot 
be significantly different. 
Let us recall the notations
we use: $\s_1,\ldots,\s_{N}$ are the strategies of the~$N$ 
selfish nodes, and
$\CC^{(k)}(\s_1,\ldots,\s_{N})$, $k=1,\ldots,N$, are the mean costs of
the selfish nodes, defined in~\eqref{df_mean_cost}.
Now, we have the following
\begin{theo}
\label{t_close_equilibr}
For any~$\eps>0$ there exists~$N_0$ (depending on the default
attachment strategy) such that, for all $N\geq N_0$ and 
any Nash equilibrium $(\s_1,\ldots,\s_N)$ it holds that 
\begin{equation}
\label{eq_close_equilibr} 
 \big|\CC^{(k)}(\s_1,\ldots,\s_N) - \CC^{(j)}(\s_1,\ldots,\s_N)\big|
  < \eps
\end{equation}
for all $k,j\in\{1,\ldots,N\}$.
\end{theo}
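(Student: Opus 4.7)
The plan is to combine two ingredients. First, because each individual selfish node is responsible for only the fraction $\frac{1}{(\kappa+1)N}$ of all attached transactions, changing a single selfish node's strategy perturbs the overall dynamics, and hence every node's cost, by at most $O(1/N)$. Second, if two selfish nodes happen to use the same strategy, then by indistinguishability their costs coincide exactly. Applying the Nash condition to the deviation ``node $k$ copies node $j$'s strategy'' will then force $\CC^{(k)}$ and $\CC^{(j)}$ to lie within $O(1/N)$ of each other.

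Concretely, I would first augment the Markov chain underlying Assumption~D so that each of the last $n_1$ transactions additionally carries a mark recording which of the $(\kappa+1)N$ nodes issued it. Under any profile $(\s_1,\ldots,\s_N)$ this yields a finite Markov chain $\mathcal{X}$ on an enlarged finite state space. Its irreducibility, aperiodicity, and (crucially) a lower bound on the spectral gap uniform in $N$ and in the selfish profile can be extracted from the fact that a fraction $\kappa/(\kappa+1)$ of all incoming transactions come from honest nodes, which alone provide a Doeblin-type minorization inherited from Assumption~D. By the ergodic theorem, the cost $\CC^{(k)}$ defined in~\eqref{df_mean_cost} is a bounded linear functional of the stationary distribution~$\pi$, applied to observables that record the events $R^{(v)}_\ell$ restricted to transactions $v$ marked as issued by node~$k$.

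The second step is the perturbation estimate. Replacing $\s_k$ by any other $\s'\in\QQ$ alters the one-step kernel only on the event that the next attached transaction is issued by node~$k$, an event of probability $\frac{1}{(\kappa+1)N}$; hence the perturbed kernel $P'$ satisfies $\|P-P'\|_{\infty}=O(1/N)$ uniformly in the remaining coordinates of the profile. Standard perturbation theory for uniformly ergodic finite Markov chains (for instance Mitrophanov's bound, which depends only on the spectral gap) then yields $\|\pi-\pi'\|_{\mathrm{TV}}\leq C/N$ with $C$ depending only on the default strategy, and consequently
\begin{equation*}
 \bigl|\CC^{(j)}(\s_1,\ldots,\s_{k-1},\s_k,\s_{k+1},\ldots,\s_N)
      -\CC^{(j)}(\s_1,\ldots,\s_{k-1},\s',\s_{k+1},\ldots,\s_N)\bigr|
 \leq C'/N
\end{equation*}
for every $j$, every $k$, and every $\s'\in\QQ$.

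Finally, I would run the copy argument. Given a Nash equilibrium $(\s_1,\ldots,\s_N)$ and any two indices $j,k$, set $\s'_k=\s_j$ and $\s'_i=\s_i$ for $i\neq k$, producing a profile in which nodes $k$ and $j$ use identical strategies. Indistinguishability of the selfish nodes then yields $\CC^{(k)}(\s'_1,\ldots,\s'_N)=\CC^{(j)}(\s'_1,\ldots,\s'_N)$, while the Nash condition from Definition~\ref{df_Nash_eq} applied to node~$k$ and the perturbation estimate together give
\begin{equation*}
 \CC^{(k)}(\s_1,\ldots,\s_N)
  \leq \CC^{(k)}(\s'_1,\ldots,\s'_N)
  = \CC^{(j)}(\s'_1,\ldots,\s'_N)
  \leq \CC^{(j)}(\s_1,\ldots,\s_N)+C'/N.
\end{equation*}
Exchanging the roles of $j$ and $k$ gives the reverse bound, so taking $N_0>C'/\eps$ proves~\eqref{eq_close_equilibr}. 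The main obstacle is the perturbation step: one must establish a lower bound on the spectral gap of $\mathcal{X}$ that is uniform over all admissible selfish profiles, which requires carefully transferring the Doeblin/minorization condition from the honest-only chain of Assumption~D to the marked chain governed by the full profile, and verifying that the ergodic-theoretic representation of $\CC^{(k)}$ is indeed Lipschitz in $\pi$ with a constant independent of $N$.
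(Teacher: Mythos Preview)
Your overall architecture---have the worst-off node copy the best-off node's strategy, invoke symmetry, and bound the resulting perturbation---is exactly the paper's. The difficulty is in the perturbation step, and there your marked-chain route has a genuine gap.

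The problem is the very clause you flag at the end: the Lipschitz constant of $\CC^{(k)}$ as a functional of the marked stationary law $\pi$ is \emph{not} independent of~$N$. Since $\CC^{(k)}$ is the conditional expectation of $W$ given that the issuing node is~$k$, and that conditioning event has $\pi$-mass exactly $\frac{1}{(\kappa+1)N}$, one has $\CC^{(k)}=(\kappa+1)N\int W\,\1{\text{node }k}\,d\pi$. The integrand is bounded by~$M_0$, so a TV perturbation $\|\pi-\pi'\|_{\mathrm{TV}}\le C/N$ yields only $|\CC^{(k)}-\CC'^{(k)}|\le M_0(\kappa+1)C$, which is $O(1)$ and useless. (The state space of your marked chain also grows like $N^{n_1}$, which makes the uniform-gap claim delicate, but that is secondary.)

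The paper sidesteps this by never marking the chain. It works on the fixed space~$\G_{n_1}$, observing that the profile $(\s_1,\ldots,\s_N)$ affects the unmarked dynamics only through the average $\s=\frac{1}{N}\sum_i\s_i$, and represents the cost as $\CC^{(j)}=\IE^{\s_j,\s}_{\pi_\s}W$: sample the current window from~$\pi_\s$, attach once with~$\s_j$, then run $M_0$ further steps under the mixture~$\s$. Changing $\s_k$ moves $\s$ by $O(1/N)$, and the two resulting effects are controlled separately. The change in the $M_0$-step evolution is handled by a direct coupling (Lemma~\ref{l_1st_node}): on the event that node~$k$ is not among the $M_0$ subsequent issuers---complement probability at most $M_0/N$---nothing changes, giving $\big|\IE^{\s_j,\s}_{\pi} W-\IE^{\s_j,\s'}_{\pi} W\big|\le M_0/N$ for any fixed starting law~$\pi$. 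The change in the stationary law is handled not by a quantitative spectral bound but by the continuity of $\s\mapsto\pi_\s$ (Lemma~\ref{l_continuous}) together with compactness of the strategy space, which upgrades continuity to uniform continuity and produces the required~$N_0$. Your Mitrophanov-type argument would in fact work for this second piece \emph{on the unmarked chain}, and would even give an explicit rate; the point is that it must be applied there, where the relevant functional has Lipschitz constant $O(M_0)$, not on the marked chain.
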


Now, let us define the notion of \emph{approximate} Nash
equilibrium:
\begin{df}
\label{df_approx_Nash_eq}
For a fixed $\eps>0$,
we say that a set of strategies $(\s_1,\ldots,\s_{N})\in \QQ^N$
 is an $\eps$-equilibrium if 
\[
 \CC^{(k)}(\s_1,\ldots,\s_{k-1},\s_k,\s_{k+1},\ldots,\s_{N})
 \leq 
\CC^{(k)}(\s_1,\ldots,\s_{k-1},\s',\s_{k+1},\ldots,\s_{N}) + \eps
\]
for any~$k$ and any $\s'\in \QQ$.
\end{df}

The motivation for introducing
this notion is that, if~$\eps$ is very small, then, in practice,
$\eps$-equilibria are essentially indistinguishable from the 
``true'' Nash equilibria. 

\begin{theo}
\label{t_symm_equilibr}
For any~$\eps>0$ there exists~$N_0$ (depending on the default
attachment strategy) such that, for all $N\geq N_0$ and 
any Nash equilibrium $(\s_1,\ldots,\s_N)$ it holds that 
$(\s,\ldots,\s)$ is an $\eps$-equilibrium, where
\begin{equation}
\label{eq_symm_equilibr} 
\s=\frac{1}{N}\sum_{k=1}^N \s^{(k)}
\end{equation}
(that is, all selfish nodes use the same ``averaged'' strategy
defined above). The costs of all selfish nodes are then equal
to 
\[
 \frac{1}{N}\sum_{k=1}^N \CC^{(k)}(\s_1,\ldots,\s_N),
\]
that is, the average cost in the Nash equilibrium.
\end{theo}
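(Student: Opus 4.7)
The plan is to exploit the fact that the restriction of the tangle to the last $n_1$ transactions evolves as a Markov chain whose transition probabilities depend on the strategy profile $(\s_1,\ldots,\s_N)$ only through the \emph{marginal} $\bar\s := \frac{1}{N}\sum_k \s_k$: each incoming transaction is issued by a uniformly chosen node among the $N$ selfish ones, so the attachment method of every new transaction is drawn from $\bar\s$. Define $G(\mu,\nu)$ to be the stationary value of $M_0^{-1}\mathbb{E}[W(v)]$ in a system with marginal $\mu$, conditional on $v$'s own attachment method being sampled from $\nu$, and set $F(\mu):=G(\mu,\mu)$. Then $G(\mu,\cdot)$ is affine in its second argument, and by the ergodic theorem
\[
  \CC^{(k)}(\s_1,\ldots,\s_N) = G(\bar\s,\s_k).
\]

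From this the equality-of-costs claim is immediate: in the symmetric profile $(\s,\ldots,\s)$ the marginal is $\s=\bar\s$, so every node's cost equals $G(\s,\s)=F(\bar\s)$, while
\[
  \frac{1}{N}\sum_{k=1}^N \CC^{(k)}(\s_1,\ldots,\s_N) = G\!\left(\bar\s,\tfrac{1}{N}\sum_k \s_k\right) = G(\bar\s,\bar\s) = F(\bar\s)
\]
by affineness of $G(\bar\s,\cdot)$. For the $\eps$-equilibrium property, fix a deviation $\s' \in \QQ$ by node $1$ from $\s$; its new cost is $G(\mu_1,\s')$, where the perturbed marginal $\mu_1 := \bar\s + \frac{1}{N}(\s'-\s)$ satisfies $\|\mu_1 - \bar\s\|_{TV} \leq 2/N$. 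Applying the Nash property of the original equilibrium to each $k$ separately gives $G(\bar\s,\s_k) \leq G(\bar\s^{(k)},\s')$ with $\bar\s^{(k)} := \bar\s + \frac{1}{N}(\s'-\s_k)$, and again $\|\bar\s^{(k)} - \bar\s\|_{TV} \leq 2/N$. Averaging the Nash inequalities over $k$,
\[
  F(\bar\s) \;=\; \frac{1}{N}\sum_k G(\bar\s,\s_k) \;\leq\; \frac{1}{N}\sum_k G(\bar\s^{(k)},\s'),
\]
and a uniform modulus of continuity for $G(\cdot,\s')$ at $\bar\s$ bounds each summand on the right by $G(\bar\s,\s') + o(1)$ as $N\to\infty$; a final continuity step replaces $\bar\s$ by $\mu_1$. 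Choosing $N_0$ large enough that the accumulated error lies below $\eps$ yields $F(\bar\s) \leq G(\mu_1,\s') + \eps$, which is precisely the $\eps$-equilibrium condition.

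The principal technical difficulty is this continuity of $G(\cdot,\nu)$, uniform in $\nu \in \QQ$: one needs the stationary distribution of the reduced Markov chain on $\G_{n_1}$ to vary continuously in total variation with $\mu$. Assumption~D provides uniform ergodicity in a neighborhood of the relevant marginals, and since the transition kernels depend continuously on $\mu$ and $W$ is bounded by $M_0$, standard perturbation estimates for finite ergodic chains combined with the Heine--Cantor theorem on the compact set $\QQ\times\QQ$ deliver a joint modulus of continuity. It is worth noting that Theorem~\ref{t_close_equilibr} is never explicitly invoked in the argument above: the averaging over $k$ directly produces $F(\bar\s)$ on the left-hand side, bypassing any need to first reduce the $\CC^{(k)}$ to a common value.
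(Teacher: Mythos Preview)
Your argument is correct and takes a genuinely different route from the paper's.

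The paper proves the $\eps$-equilibrium inequality for node~$1$ by invoking the Nash condition \emph{only} for node~$1$: it compares $\CC^{(1)}(\s,\ldots,\s)=\IE^{(\s,\s)}_{\pi_\s}W^{(1)}$ with $\CC^{(1)}(\s_1,\ldots,\s_N)=\IE^{(\s_1,\s)}_{\pi_\s}W^{(1)}$ via Theorem~\ref{t_close_equilibr} (which guarantees that all individual costs in a Nash equilibrium are already $\eps$-close), then uses the single Nash inequality $\CC^{(1)}(\s_1,\ldots,\s_N)\le\CC^{(1)}(\tilde\s,\s_2,\ldots,\s_N)$ together with a uniform-continuity step to reach the deviation cost $\CC^{(1)}(\tilde\s,\s,\ldots,\s)$. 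In contrast, you apply the Nash inequality to \emph{every} node~$k$ and then average over~$k$; the affineness of $G(\bar\s,\cdot)$ collapses the left-hand average to $F(\bar\s)$ exactly, so Theorem~\ref{t_close_equilibr} is never needed. Both arguments rest on the same technical pillar---uniform continuity of $(\mu,\nu)\mapsto G(\mu,\nu)$ on the compact $\QQ\times\QQ$, which in the paper follows from Lemma~\ref{l_continuous} on the continuity of the stationary measure---and both require the same number of continuity applications, so the economy is purely structural: your proof is self-contained, while the paper's leans on Theorem~\ref{t_close_equilibr} as an already-established intermediate result.

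One small point of presentation: your sentence ``each incoming transaction is issued by a uniformly chosen node among the $N$ selfish ones'' overlooks the $\kappa N$ nodes following the default strategy. This does not affect the mathematics---the transition kernel still depends on $(\s_1,\ldots,\s_N)$ only through $\bar\s$, now entering via the convex combination $\tfrac{\kappa}{\kappa+1}P+\tfrac{1}{\kappa+1}\hP(\bar\s)$ as in Lemma~\ref{l_continuous}---but the wording should be adjusted.
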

In other words, for large~$N$ one can essentially assume that 
all selfish nodes follow the same attachment strategy.
This result will be important in Section~\ref{s_simulations},
because it makes it possible to use (practical) simulations
in order to find the Nash equilibria of systems with large
number of selfish players.

\subsection{Proofs}
\label{s_proofs}
First, we need the following technical result:
\begin{lem}
\label{l_continuous}
  Let~$P$ be the transition matrix of an irreducible
and aperiodic discrete-time Markov chain on a finite 
state space~$E$. Let~$\hP$ be a continuous map from a compact
set~$F\subset\R^d$ to the set of all
stochastic matrices on~$E$ (equipped by the distance inherited
 from the usual matrix norm on the space of all matrices on~$E$). 
Fix~$\theta\in(0,1)$, denote 
$\tP(s) = \theta P + (1-\theta)\hP(s)$, and let~$\pi_s$
be the (unique) stationary measure of $\tP(s)$.
Then
$\pi_s$ is also continuous (as a function of~$s$).
\end{lem}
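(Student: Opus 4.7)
The plan is to reduce the statement to a question about continuity of matrix inversion. First, since $\tP(s) = \theta P + (1-\theta)\hP(s) \geq \theta P$ entrywise with $\theta > 0$, any positive entry of $P^n$ forces the corresponding entry of $\tP(s)^n$ to be positive. Hence $\tP(s)$ is irreducible and aperiodic for every $s \in F$, and by standard Perron-Frobenius theory it has a unique stationary distribution $\pi_s$ (regarded as a row vector); equivalently, writing $A(s) := (I - \tP(s))^T$, the kernel of $A(s)$ is one-dimensional and spanned by $\pi_s^T$.

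Next, I would encode the two defining conditions $\pi_s \tP(s) = \pi_s$ and $\sum_{e \in E} \pi_s(e) = 1$ as a single non-singular linear system. Let $M(s)$ be the matrix obtained from $A(s)$ by replacing its first row with the all-ones row $\mathbf{1}^T$; then $\pi_s^T$ satisfies $M(s) x = e_1$, where $e_1$ is the first standard basis vector. To check that $M(s)$ is invertible for every $s \in F$, suppose $M(s) v = 0$. Because $\tP(s)$ is stochastic, $\mathbf{1}^T A(s) = 0$, so $\sum_j [A(s) v]_j = 0$; combined with $[A(s) v]_j = 0$ for $j \geq 2$, this forces $A(s) v = 0$. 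Therefore $v = c \pi_s^T$ for some scalar $c$, but $\mathbf{1}^T v = 0$ while $\mathbf{1}^T \pi_s^T = 1$, so $c = 0$ and $v = 0$.

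Finally, the map $M: F \to \R^{|E| \times |E|}$ is continuous (as $\hP$ is continuous, and transposition and replacement of a single row are continuous operations), and by the previous step it always takes values in the open set of invertible matrices. Since matrix inversion is continuous on that open set, $s \mapsto \pi_s^T = M(s)^{-1} e_1$ is continuous on $F$, which is the desired conclusion.

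The main obstacle is, in my view, purely bookkeeping: one must track carefully the distinction between the left and right null spaces of $I - \tP(s)$ and the placement of transposes. The linear-algebra content itself is light once the correct augmented system is written down. A more analytic alternative would exploit the uniform minorization $\tP(s)^{n_0} \geq \theta^{n_0} P^{n_0}$ (choosing $n_0$ so that $P^{n_0}$ is strictly positive) to obtain a uniform Dobrushin contraction, from which $\tP(s)^n \to \mathbf{1}\pi_s$ uniformly on $F$ and continuity of $\pi_s$ would follow; however, the matrix-inversion route above is shorter and self-contained.
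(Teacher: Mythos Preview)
Your proof is correct and complete, but it follows a different route than the paper's. The paper argues probabilistically: from the same minorization $\tP(s)^{m_0}\geq \theta^{m_0}P^{m_0}\geq \theta^{m_0}\eps_0$ it extracts a uniform exponential tail bound $\IP_x^{(s)}[\tau(x)\geq n]\leq c_1 e^{-c_2 n}$ for the return time~$\tau(x)$, then invokes Kac's lemma $\pi_s(x)=1/\IE_x^{(s)}\tau(x)$ and the Uniform Limit Theorem to conclude that $\IE_x^{(s)}\tau(x)=\sum_n \IP_x^{(s)}[\tau(x)\geq n]$ is continuous in~$s$. Your argument instead packages the two constraints $\pi_s(I-\tP(s))=0$ and $\pi_s\mathbf{1}=1$ into a single nonsingular linear system $M(s)x=e_1$ and appeals to the continuity of matrix inversion; the key step---showing $M(s)$ is invertible by using that the rows of $(I-\tP(s))^T$ sum to zero---is handled cleanly. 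Your route is shorter and needs no probabilistic machinery beyond the one-dimensionality of the left $1$-eigenspace; the paper's route, on the other hand, produces as a by-product a uniform-in-$s$ geometric ergodicity estimate, which is closer in spirit to the Dobrushin-contraction alternative you mention at the end (though the paper uses Kac's lemma rather than a coupling/contraction argument).
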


\begin{proof}
In the following we give a (rather) probabilistic proof of this 
fact via the Kac's lemma, although, of course, a purely analytic
proof is also possible.
 Irreducibility and aperiodicity of~$P$ imply that, 
for some $m_0\in \N$ and~$\eps_0>0$
\begin{equation}
\label{irr_aper}
 P_{xy}^{m_0} \geq \eps_0
\end{equation}
for all $x,y\in E$, where $P^{m_0}=(P_{xy}^{m_0}, x,y\in E)$ 
is the transition matrix in~$m_0$ steps.
Now, \eqref{irr_aper} implies that
\begin{equation}
\label{tilde_irr_aper}
 \tP_{xy}^{m_0}(s) \geq \theta^{m_0}\eps_0
\end{equation}
for all $x,y\in E$ and all~$s\in F$.

Being~$(X_n,n\geq 0)$ a stochastic process on~$E$, let us define
\[
 \tau(x) = \min\{k\geq 1: X_k = x\}
\]
(with the convention $\min\emptyset = \infty$) to be the 
\emph{hitting time} of the site~$x\in E$ by the stochastic 
process~$X$. Now, let $\IP_x^{(s)}$ and~$\IE_x^{(s)}$
be the probability and the expectation with respect to the 
Markov chain with transition matrix~$\tP(s)$ starting from~$x\in E$.
We now recall the Kac's lemma (cf. e.g.\ Theorem~1.22 
of~\cite{Durrett12}): for all~$x\in E$ it holds that 
\begin{equation}
\label{eq_Kac}
 \pi_s(x) = \frac{1}{\IE_x^{(s)}\tau(x)} .
\end{equation}
Now, \eqref{tilde_irr_aper} readily implies that, for all $x\in E$
and $n\in\N$,
\begin{equation}
\label{tail_tau}
 \IP_x^{(s)}[\tau(x)\geq n] \leq c_1 e^{-c_2 n}
\end{equation}
for some positive constants~$c_{1,2}$ which do not depend on~$s$.
This in its turn implies that the series
\[
 \IE_x^{(s)}\tau(x) = \sum_{n=1}^\infty \IP_x^{(s)}[\tau(x)\geq n]
\]
converges uniformly in~$s$ and so $\IE_x^{(s)}\tau(x)$ 
is uniformly bounded from above\footnote{and, of course, it is also
bounded from below by~$1$}; 
also, the Uniform Limit Theorem (see e.g.\ Section~D.6.2
of~\cite{OK07}) implies 
that $\IE_x^{(s)}\tau(x)$ is continuous in~$s$.
Therefore,
for any~$x\in E$, \eqref{eq_Kac} implies 
that $\pi_s(x)$ is also a continuous function of~$s$.
\end{proof}

\begin{proof}[Proof of Theorem~\ref{t_Nash_exists}.]
 The authors were unable to find a result available in the literature
that implies Theorem~\ref{t_Nash_exists} directly; nevertheless,
its proof is quite standard and essentially follows 
Nash's original paper~\cite{Nash} (see also~\cite{F64}).
There is only one technical difficulty, which we intend
to address via the above preparatory steps: one needs
to prove the continuity of the cost function.

Denote by $\pi_\s$ the invariant measure of the Markov
chain given that the (selfish) nodes use the ``strategy vector''
$\sss=(\s_1,\ldots,\s_N)$. 
Then, the idea is to use Lemma~\ref{l_continuous} with
 $\theta=\frac{\kappa}{\kappa+1}$, $P$ the transition
matrix obtained from the default attachment strategy,
and~$\hP(s)$ is the transition
matrix obtained from the strategy $\s'=N^{-1}\sum_{k=1}^N \s_k$
(observe that $N$ nodes using the strategies $\s_1,\ldots,\s_N$,
is the same as one node with strategy~$\s'$ issuing
transactions~$N$ times faster).
Assumption~D together with Lemma~\ref{l_continuous} then
imply that~$\pi_\sss:=\pi_{\s'}$ 
is a continuous function of~$\sss$.

Let $\IE^{\s, \hat\s}_{\pi_{\s'}}$ be the expectation
with respect to the following procedure:
take the ``starting'' graph according to~$\pi_{\s'}$,
then attach to it a transaction according to the strategy~$\s$,
and then keep attaching subsequent transactions 
according to the strategy~$\hat\s$ (instead of $\s'$ and~$\hat\s$
we may also use the strategy vectors; $\s'$ and~$\hat\s$ would be 
then their averages).
Let also~$W^{(k)}$ be the random variable defined
as in~\eqref{df_W^v} for an arbitrary transaction~$v$ issued by the
$k$th node. 
Then, the Ergodic Theorem for Markov chains (see e.g.\
Theorem~1.23 of~\cite{Durrett12}) implies that
\begin{equation}
 \CC^{(k)}(\s) = \IE^{\s_k,\s'}_{\pi_{\s'}} W^{(k)}.
\end{equation}
It is not difficult to see that the above expression
is a polynomial of the $\s$'s coefficients (i.e., the 
corresponding probabilities) and $\pi_{\s'}$-values,
and hence it is a continuous function on the space 
of strategies~$\MM_{n_1}$.
Using this, the rest of the proof is standard,
it is obtained as a consequence of the Kakutani's
fixed point theorem~\cite{Kakutani}, also with the help
of the Berge's Maximum Theorem
(see e.g.\ Chapter~E.3 of~\cite{OK07}).
\end{proof}

\begin{proof}[Proof of Theorem~\ref{t_close_equilibr}.]
Without restricting generality we may assume that
\begin{align*}
 \CC^{(1)}(\s_1,\ldots,\s_N) &= 
   \max_{k=1,\ldots,N}\CC^{(k)}(\s_1,\ldots,\s_N),\\
\CC^{(2)}(\s_1,\ldots,\s_N) &= 
   \min_{k=1,\ldots,N}\CC^{(k)}(\s_1,\ldots,\s_N),
\end{align*}
so we then need to proof that $\CC^{(1)}(\sss)-\CC^{(2)}(\sss)<\eps$,
where $\sss=(\s_1,\ldots,\s_N)$.
Now, the main idea of the proof is the following: if $\CC^{(1)}(\sss)$ is considerably larger than $\CC^{(2)}(\sss)$, then the owner of 
the first node may decide to adopt the strategy used by the second
one. This would not necessarily decrease his costs to the former
costs of the second node since a change in an individual
strategy leads to changes in \emph{all} costs; however, 
when~$N$ is large, the effects of changing the strategy
of only one node would be small, and (if the difference
of $\CC^{(1)}(\sss)$ and $\CC^{(2)}(\sss)$ were not small)
this would lead to a contradiction to the assumption
that~$\sss$ was a Nash equilibrium.

So, let us denote $\sss'=(\s_2,\s_2,\s_3,\ldots,\s_N)$,
the strategy vector after the first node adopted the strategy
of its ``more successful'' colleague,
see Figure~\ref{f_adopt_str}. 
\begin{figure}
 \centering \includegraphics{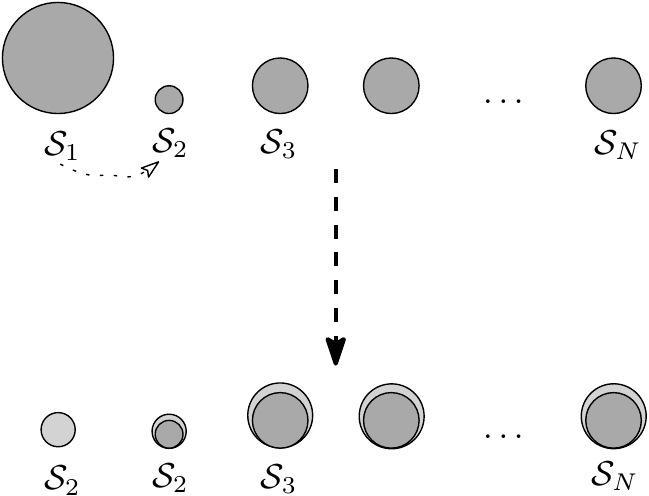} 
\caption{On the main idea of the proof 
of Theorem~\ref{t_close_equilibr}. The node with the highest
cost will switch to the strategy of the node with the lowest
cost. That will not guarantee exactly that same cost to the 
former node, but the difference will be rather small
since~$N$ is large (so the change in one component of
the strategy vector will not influence a lot the outcome).}
\label{f_adopt_str}
\end{figure}
Let
\[
 \s = \frac{1}{N}\big(\s_1+\cdots +\s_N\big)
 \text{ and }\s' = \frac{1}{N}\big(2\s_2+\s_3+\cdots +\s_N\big)
\]
be the two ``averaged'' strategies.
In the following, we are going
 to compare $\CC^{(2)}(\sss)=\IE^{(\s_2,\s)}_{\pi_{\s}} W^{(2)}$
(the ``old'' cost of the second node) 
with $\CC^{(1)}(\sss')=\IE^{(\s_2,\s')}_{\pi_{\s'}} W^{(1)}$
(the ``new'' cost of the first node, after it adopted
the second node's strategy).
We need the following
\begin{lem}
\label{l_1st_node}
 For any measure~$\pi$ on~$\G_{n_1}$ and any  strategy 
vectors $\sss=(\s_1,\ldots,\s_N)$ and $\sss'=(\s'_1,\ldots,\s'_N)$
such that $\s_k=\s'_k$ for all $k=2,\ldots,N$, we have
\begin{equation}
\label{eq_1st_node} 
\big|\IE^{(\s_j,\s)}_{\pi} W^{(j)}-\IE^{(\s'_j,\s')}_{\pi} W^{(j)}\big|
  \leq \frac{M_0}{N}
\end{equation}
for all $j=2,\ldots,N$.
\end{lem}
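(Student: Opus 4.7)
The plan is a coupling argument exploiting the fact that node~$1$ is one of $N$ equal-rate selfish nodes (together with $\kappa N$ honest ones), so its strategy influences only a vanishing fraction of transactions over any bounded horizon.

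Concretely, I would construct a joint coupling of the two processes, starting both from the same sample $G_0\sim\pi$. Since $\s_j=\s'_j$ (because $j\geq 2$), shared randomness makes node~$j$'s first transaction~$v$ identical in both processes. For each of the $M_0$ subsequent transactions, first sample under shared randomness the identity of the issuer: honest with probability $\theta=\kappa/(\kappa+1)$, or selfish node~$k$ with probability $1/((\kappa+1)N)$ for each~$k$. Whenever the issuer is honest or a selfish node of index $\geq 2$, the attachment distributions in the two processes agree, so common randomness yields identical attached transactions; only when node~$1$ is chosen can the two processes diverge. The default tip-selection walks used in $R^{(v)}_k$ depend only on the current graph and independent random bits, so they too can be coupled whenever the graphs agree.

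Let $\tau\in\{1,\ldots,M_0,\infty\}$ denote the first subsequent step at which node~$1$ is the issuer. On $\{\tau=\infty\}$ the two coupled processes are identical throughout the horizon, so the graphs $\TT(t^{(v)}_k)$ agree for every~$k$, hence also the indicators $\1{R^{(v)}_k}$, giving that $W^{(j)}$ takes the same value in both processes. On $\{\tau<\infty\}$ we use the trivial bound $|W^{(j)}-W^{(j)}|\leq M_0$ (comparing across the two processes). A union bound yields $\IP(\tau<\infty)\leq M_0/((\kappa+1)N)$, and combining the two estimates gives
\[
\big|\IE^{(\s_j,\s)}_\pi W^{(j)}-\IE^{(\s'_j,\s')}_\pi W^{(j)}\big|
\;\leq\; M_0\cdot\IP(\tau<\infty)
\;\leq\;\frac{M_0^2}{(\kappa+1)N},
\]
which, for fixed $M_0$, is of the order $1/N$ claimed in the lemma (the remaining factor $M_0/(\kappa+1)$ being an irrelevant constant for the subsequent use in Theorem~\ref{t_close_equilibr}). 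I expect no real obstacle here; the only careful point is the joint realization of issuer selection, strategy-driven attachment, and default tip-selection walk on a single probability space, which is standard via maximal couplings applied step by step.
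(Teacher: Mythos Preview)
Your coupling argument is essentially the paper's proof: the paper introduces the same event $A=\{\tau\le M_0\}$ (``among the $M_0$ transactions there is at least one issued by the first node''), uses the same union bound $\IP[A]\le M_0/N$, and observes that on~$A^c$ the two expectations coincide because node~$1$ does not contribute. The only discrepancy is in the constant: the paper reaches $M_0/N$ by invoking ``$W^{(j)}\le 1$'', which in light of the definition $W(v)=\sum_{k=1}^{M_0}\1{R^{(v)}_k}$ looks like a slip (or an implicit normalization by~$M_0$); your bound $M_0^2/((\kappa+1)N)$ is the honest one, and as you correctly note it is all that is needed in Theorem~\ref{t_close_equilibr}.
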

\begin{proof}
Let us define the event
\[
 A = \{\text{among the $M_0$ transactions there is 
at least one issued by the first node} \},
\]
and observe that, by the union bound, the probability that 
it occurs is at most~$M_0/N$.
Then, using the fact that 
$\IE^{(\s_j,\s)}_{\pi} (W^{(j)}\1{A^c})
=\IE^{(\s_j,\s')}_{\pi} (W^{(j)}\1{A^c})$
(since, on~$A^c$, the first node does not ``contribute''
to~$W^{(j)}$), write
\begin{align*}
\lefteqn{
\big|\IE^{(\s_j,\s)}_{\pi} W^{(j)}-\IE^{(\s'_j,\s')}_{\pi} W^{(j)}\big|
}
\\
&= \big|\IE^{(\s_j,\s)}_{\pi} (W^{(j)}\1{A})
+\IE^{(\s_j,\s)}_{\pi} (W^{(j)}\1{A^c})
-\IE^{(\s_j,\s')}_{\pi} (W^{(j)}\1{A})
-\IE^{(\s_j,\s')}_{\pi} (W^{(j)}\1{A^c})\big|\\
&= \big|\IE^{(\s_j,\s)}_{\pi} (W^{(j)}\1{A})
-\IE^{(\s_j,\s')}_{\pi} (W^{(j)}\1{A})\big|\\
&\leq \frac{M_0}{N},
\end{align*}
where we also used that $W^{(j)}\leq 1$. 
This concludes the proof of Lemma~\ref{l_1st_node}.
\end{proof}

We continue proving Theorem~\ref{t_close_equilibr}.
First, by symmetry, we have
\begin{equation}
\label{2.4_1}
  \IE^{(\s_2,\s')}_{\pi_{\s'}} W^{(1)}
 = \IE^{(\s_2,\s')}_{\pi_{\s'}} W^{(2)}.
\end{equation}
Also, it holds that
\begin{equation}
\label{2.4_2}
 \big|\IE^{(\s_2,\s')}_{\pi_{\s'}} W^{(2)} 
    - \IE^{(\s_2,\s)}_{\pi_{\s'}} W^{(2)} \big|
\leq \frac{M_0}{N}
\end{equation}
by Lemma~\ref{l_1st_node}.
Then, similarly to the proof of Theorem~\ref{t_Nash_exists},
we can obtain that the function
\[
 (\s,\s',\s'') \mapsto \IE^{(\s,\s')}_{\pi_{\s''}} W^{(2)}
\]
is continuous; since it is defined on a compact, 
it is also uniformly continuous. That is,
for any~$\eps'>0$ there exist~$\delta'>0$ such that 
if $\|(\s,\s',\s'')-({\tilde\s},{\tilde\s}',{\tilde\s}'')\|<\delta'$,
 then
\[
 \big|\IE^{(\s,\s')}_{\pi_{\s''}} W^{(2)}
  - \IE^{({\tilde\s},{\tilde\s}')}_{\pi_{{\tilde\s}''}} W^{(2)} \big|
      < \eps'.
\]
Choose~$N_0=\lceil1/\delta' \rceil$. We then obtain from the above 
that
\begin{equation}
\label{2.4_3}
 \big|\IE^{(\s_2,\s)}_{\pi_{\s'}} W^{(2)} 
  - \IE^{(\s_2,\s)}_{\pi_{\s}} W^{(2)}  \big|
      < \eps'.
\end{equation}

 The relations~\eqref{2.4_1}, \eqref{2.4_2}, and~\eqref{2.4_3}
imply that
\[
  \big| \IE^{(\s_2,\s')}_{\pi_{\s'}} W^{(1)}    
-  \IE^{(\s_2,\s)}_{\pi_{\s}} W^{(2)}
 \big|
 \leq \eps' + \frac{M_0}{N}.
\]
On the other hand, since we assumed that~$\sss$ is a Nash equilibrium,
it holds that 
\begin{equation}
\label{eq***N_eq}
 \IE^{(\s_2,\s')}_{\pi_{\s'}} W^{(1)} = \CC^{(1)}(\sss') \geq
   \CC^{(1)}(\sss) = \IE^{(\s_1,\s)}_{\pi_{\s}} W^{(1)},
\end{equation}
which implies that
\[
  \IE^{(\s_1,\s)}_{\pi_{\s}} W^{(1)}
- \IE^{(\s_2,\s)}_{\pi_{\s}} W^{(2)} \leq \eps' + \frac{M_0}{N}.
\]
This concludes the proof of Theorem~\ref{t_close_equilibr}.
\end{proof}

\begin{proof}[Proof of Theorem~\ref{t_symm_equilibr}.]
To begin, we observe that the proof of the second part is immediate,
since, as already noted before, for an external observer,
the situation where there are~$N$ nodes with strategies
$(\s_1,\ldots,\s_N)$ is indistinguishable from the situation
with one node with averaged strategy. 

Now, we need to prove that, for any fixed~$\eps'>0$ it holds that
\begin{equation}
\label{eps_eq_1}
  \CC^{(1)}(\s,\ldots,\s) \leq 
\CC^{(1)}({\tilde\s},\s,\ldots,\s) + \eps'
\end{equation}
for all large enough~$N$ 
(the claim would then follow by symmetry).
Recall that we have 
\begin{align}
\label{d1}
\CC^{(1)}(\s,\ldots,\s)&=\IE^{(\s,\s)}_{\pi_{\s}} W^{(1)},\\
\label{d2}
\CC^{(1)}(\s_1,\ldots,\s_N)&=\IE^{(\s_1,\s)}_{\pi_{\s}} W^{(1)},\\
\intertext{and}
\label{d3}
\CC^{(1)}({\tilde\s},\s,\ldots,\s)
&= \IE^{({\tilde\s},\s')}_{\pi_{\s'}} W^{(1)},
\end{align}
where
\[
\s'=\frac{1}{N}\big({\tilde\s}+(N-1)\s\big)
 = \frac{1}{N}\Big({\tilde\s}+\frac{N-1}{N}
 (\s_1+\cdots+\s_N)\Big).
\]

Now, the second part of this theorem together with
 Theorem~\ref{t_close_equilibr} imply\footnote{note that 
Theorem~\ref{t_close_equilibr} implies that, when~$N$ is large,
the nodes already have ``almost'' the same cost 
in the Nash equilibrium $(\s_1,\ldots,\s_N)$}
 that, for any fixed $\eps>0$
\begin{equation}
\label{close_average}
  \big|\IE^{(\s,\s)}_{\pi_{\s}} W^{(1)}
     - \IE^{(\s_1,\s)}_{\pi_{\s}} W^{(1)}\big| < \eps
\end{equation}
for all large enough~$N$. 

Next, let us denote
\[
 \s'' = \frac{1}{N}({\tilde\s}+\s_2+\cdots+\s_N).
\]
Then, again using the uniform continuity argument
(as in the proof of Theorem~\ref{t_close_equilibr}),
we obtain that, for any~$\eps''>0$
\begin{equation}
\label{eps''}
 \big| \IE^{({\tilde\s},\s')}_{\pi_{\s'}} W^{(1)}
    - \IE^{({\tilde\s},\s'')}_{\pi_{\s''}} W^{(1)}\big|
     < \eps''
\end{equation}
for all large enough~$N$. However,
\[ 
\IE^{({\tilde\s},\s'')}_{\pi_{\s''}} W^{(1)}
 = \CC^{(1)}({\tilde\s},\s_2,\ldots,\s_N)
 \geq \CC^{(1)}(\s_1,\s_2,\ldots,\s_N)
=\IE^{(\s_1,\s)}_{\pi_{\s}} W^{(1)},
\]
since $(\s_1,\ldots,\s_N)$ is a Nash equilibrium.
 Then, \eqref{close_average}--\eqref{eps''}
imply that
\[
  \big|\IE^{(\s,\s)}_{\pi_{\s}} W^{(1)}
 -  \IE^{({\tilde\s},\s')}_{\pi_{\s'}} W^{(1)} \big| 
 <\eps + \eps'',
\]
and, recalling~\eqref{d1} and~\eqref{d3}, 
we conclude the proof of Theorem~\ref{t_symm_equilibr}.
\end{proof}

\section{Simulations}
\label{s_simulations}
In this section we investigate Nash
equilibria between selfish nodes via simulations.
As discussed in Section~\ref{s_intro},
this is motivated by the following important question:
since the choice of an attachment strategy is not enforced,
there may indeed be nodes which would prefer
to ``optimise'' their strategies in order to 
decrease the mean confirmation time of their transactions.
So, can this lead to a situation where the corresponding
Nash equilibrium is ``bad for everybody'', effectively
leading to the system's malfunctioning?

Due to Theorem~\ref{t_symm_equilibr} 
we may assume that all selfish nodes
use the same attachment strategy. Even then,
it is probably unfeasible to calculate that 
strategy exactly; instead, we resort to simulations,
which indeed will show that the equilibrium strategy 
of the selfish
nodes will not be much different from the 
(suitably chosen) default strategy, at least in the (very natural)
situation below. 
But, before doing that, let us explain the intuition behind
this fact. Naively, a reasonable strategy for a selfish node
would be the following:
\begin{itemize}
 \item[(1)] Calculate the exit distribution of the tip-selecting
 random walk.
 \item[(2)] Find the two tips where this distribution attains
its ``best''\footnote{i.e., the maximum and the second-to-maximum}
values.
 \item[(3)] Approve these two tips.
\end{itemize}

\begin{figure}
 \centering \includegraphics[width=\textwidth]{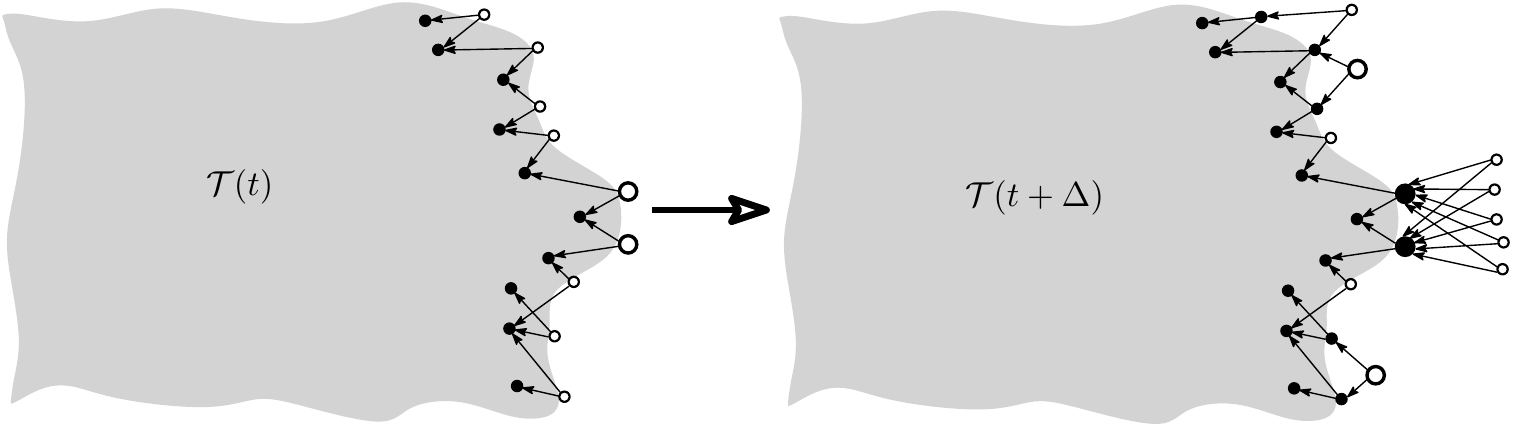} 
\caption{Why the ``greedy'' tip selection strategy will not work
(the two ``best'' tips are shown as larger circles).}
\label{f_greedy}
\end{figure}
However, this strategy fails when other selfish nodes 
are present. To understand this, look at Figure~\ref{f_greedy}:
\emph{many} selfish nodes attach their transactions
to the two ``best'' tips. As a result, the ``neighborhood'' of
these two tips 
becomes ``overcrowded'': there is so much competition 
between the transactions issued by the selfish nodes,
that the chances of them being approved soon actually
decrease\footnote{the ``new'' best tips are not among
them, as shown on Figure~\ref{f_greedy} on the right}.

To illustrate this fact, several simulations have been done. 
All the results depicted here were generated 
using~\eqref{trans_probs} as the transition probabilities,
 with $q = 1/3$, and a network delay of $h=1$ second. 
Also, a transaction will be reattached if the two 
following criteria are met:
\begin{itemize}
 \item[(1)] the transaction is older than 
20~seconds
 \item[(2)] the transaction is not referenced by the 
tip selected by a random walk with $\alpha = \infty$\footnote{here,
 when the random walk must choose among $n$ transactions with 
the same weight, it will choose randomly, with equal probabilities}.
\end{itemize}
 
 This way, we guarantee not only that the unconfirmed transactions 
will be eventually confirmed, but also that all transactions that 
were never reattached are referenced by most of the tips. 
Note that when the reattachment is allowed in the simulations,
 if a new transaction references an old, already reattached 
transaction together with its newly reissued counterpart, 
there will be a double 
spending. Even though the odds of that are low (since when 
a transaction is re-emitted, it will be old enough to be almost 
never chosen by the random walk algorithm), a specific procedure 
was included in the simulations in order to not allow double spendings.

The average costs were simulated as defined at equations~\eqref{df_W^v} and~\eqref{df_mean_cost}, so a certain value of $M_0$ had to be chosen. Since the value of $W(v)/\lambda$ is related to the time of approval of $v$ (whenever the transaction is indeed approved before $t^{(v)}_{M_0}$), we want $M_0$ to be sufficiently large, in order to capture the effect of most of the approvals. Figure~\ref{fig.conv} depicts the typical cumulative distribution of the time of the first approval, for several values of $\alpha$ and $\lambda$. Note that roughly 95$\%$ of the transactions will be approved before $t=5$s, and almost its totality will be approved before $t=10$s. For that reason, in both cases ($\lambda=25$ and $\lambda=50$), the mean cost was calculated over the transactions attached during
a time interval of approximately 10s ($M_0=500$ for $\lambda=50$ 
and $M_0=250$ for $\lambda=25$), so almost the totality of approvals will be ``seen'' by the average cost.

\begin{figure}[ht]
\centering \includegraphics[width=14cm]{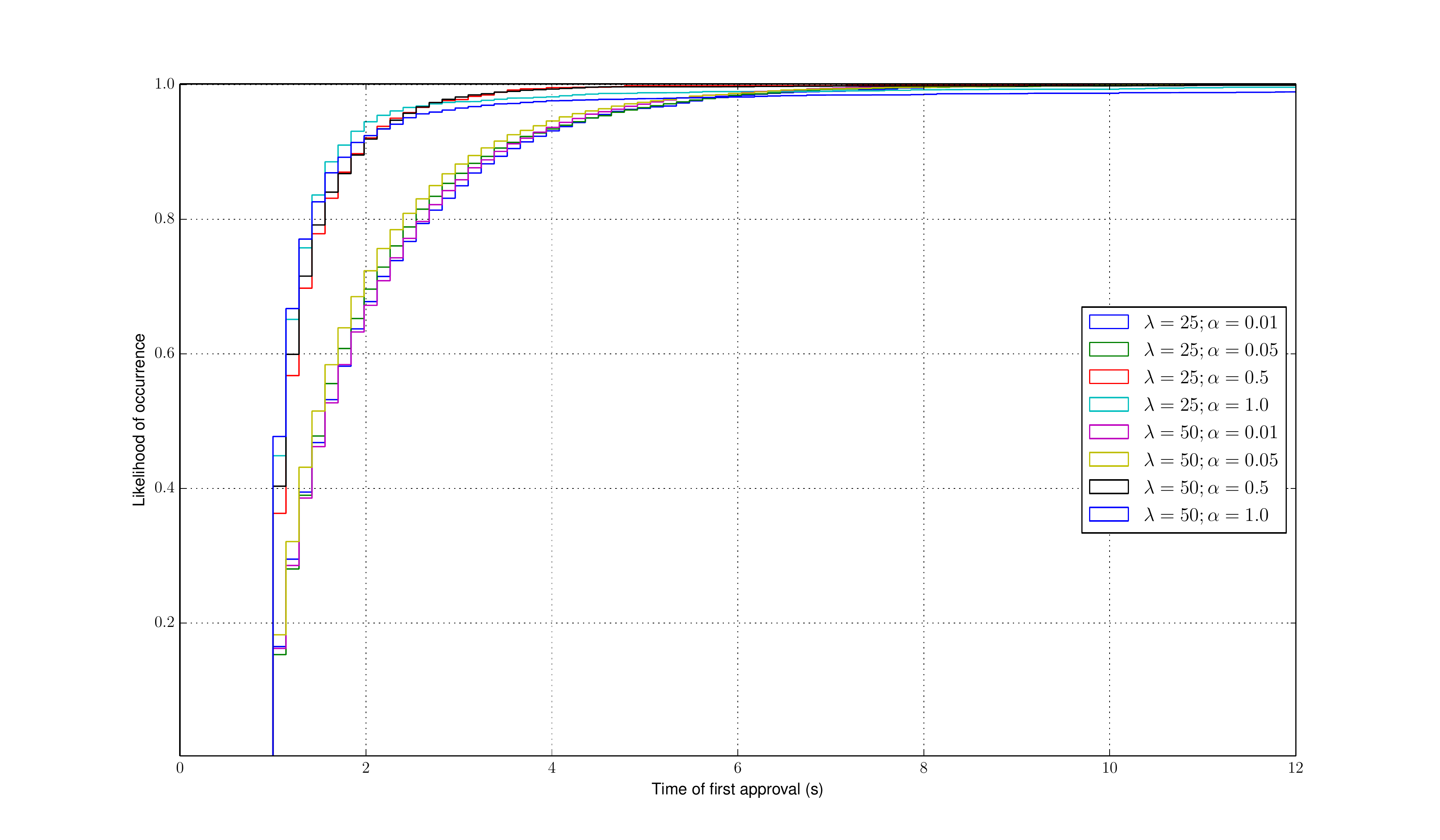} 
\caption{Cumulative distribution of time of approvals for several values of $\alpha$ and $\lambda$}
\label{fig.conv}
\end{figure}

\subsection{One dimensional Nash equilibria}
\label{s_1dim_Nash}
In this section, we will study the Nash equilibria
$(\s_1,\ldots,\s_N)$ of 
the tangle problem, considering the following strategy space:
\[
\big\{(1-\theta)\s^0 + \theta\s^1, \ 0\leq \theta \leq 1\big\}
\]
 where the simple strategies~$\s^0$ and~$\s^1$
are the default tip selection strategy
and the ``greedy'' strategy (defined in the beginning of this 
section) correspondingly;
that is, $\s_i=(1-\theta_i)\s^0 + \theta_i\s^1$ where
 $\theta_i \in [0,1]$,
$i=1,\ldots,N$. The goal is to find the Nash equilibria 
relative to the costs defined in the last section 
(equations~\eqref{df_mean_cost} and~\eqref{df_W^v}). 
The selfish nodes will 
try to optimise their transaction cost with respect to~$\theta_i$.

By Theorem~\ref{t_symm_equilibr}, each Nash equilibrium in this 
form will be equivalent to another Nash equilibrium with 
``averaged'' strategies, i.e.:
\[
\s=\left(1-\frac{1}{N}\sum_{k=1}^N \theta_i \right)\s^0 +
\frac{1}{N}\sum_{k=1}^N \theta_i \s^1=(1-\theta)\s^0 + \theta\s^1 \hspace{1cm} 
\text{for each } i=1,\ldots,N,
\]
Now, suppose that we have a fixed fraction~$\gamma$ of selfish 
nodes, that choose a strategy among the possible~$\s$. 
The non-selfish nodes will not be able to choose their strategy,
 so they will be restricted, as expected, to~$\s^0$. Note that, 
since they cannot choose their strategy, they will not ``play'' 
the game. Since the costs are linear over $\s$, such mixed strategy 
game will be equivalent\footnote{this way, we deal with just one variable ($p$) instead of two ($\gamma$ and $\theta$) and none of the parameters of the system is lost} to 
a game where only a fraction
 $p=\gamma\theta\leq\gamma$ of the nodes 
chooses~$\s^1$ over~$\s^0$, and the rest of the nodes 
chooses~$\s^0$ over~$\s^1$.  Note that this equivalence does not contradict the theorems proved in the last sections, that state:
\begin{itemize}
\item all the nodes will have the same average costs when the system is at a Nash equilibrium;
\item any Nash equilibrium has an equivalent Nash equilibrium with ``averaged'' strategies, where all the nodes will have the same strategies.
\end{itemize}

 From now on, we will refer (unless stated otherwise) 
to this second pure strategy game. 
Figure~\ref{a001}(a) represents a typical graph of average costs 
of transactions issued under~$\s^0$ and~$\s^1$, as a function 
of the fraction~$p$, for a low~$\alpha$ and two different 
values of~$\lambda$. As already demonstrated, when in equilibrium, 
the selfish nodes should issue transactions with the same average costs.
 That means that the system should reach equilibrium 
in one of the following states:
\begin{itemize}
 \item[(1)] some selfish nodes choose~$\s^0$ 
    and the rest choose $\s^1$ ($0<p<\gamma$), 
all of them with the same average costs;
 \item[(2)] all selfish nodes choose~$\s^1$ ($p=\gamma$);
 \item[(3)] all selfish nodes choose~$\s^0$ ($p=0$).
\end{itemize}

\begin{figure}[ht]
\hspace{4cm} (a) \hspace{7.4cm} (b)\\
\includegraphics[width=7.4cm,height=6.6cm]{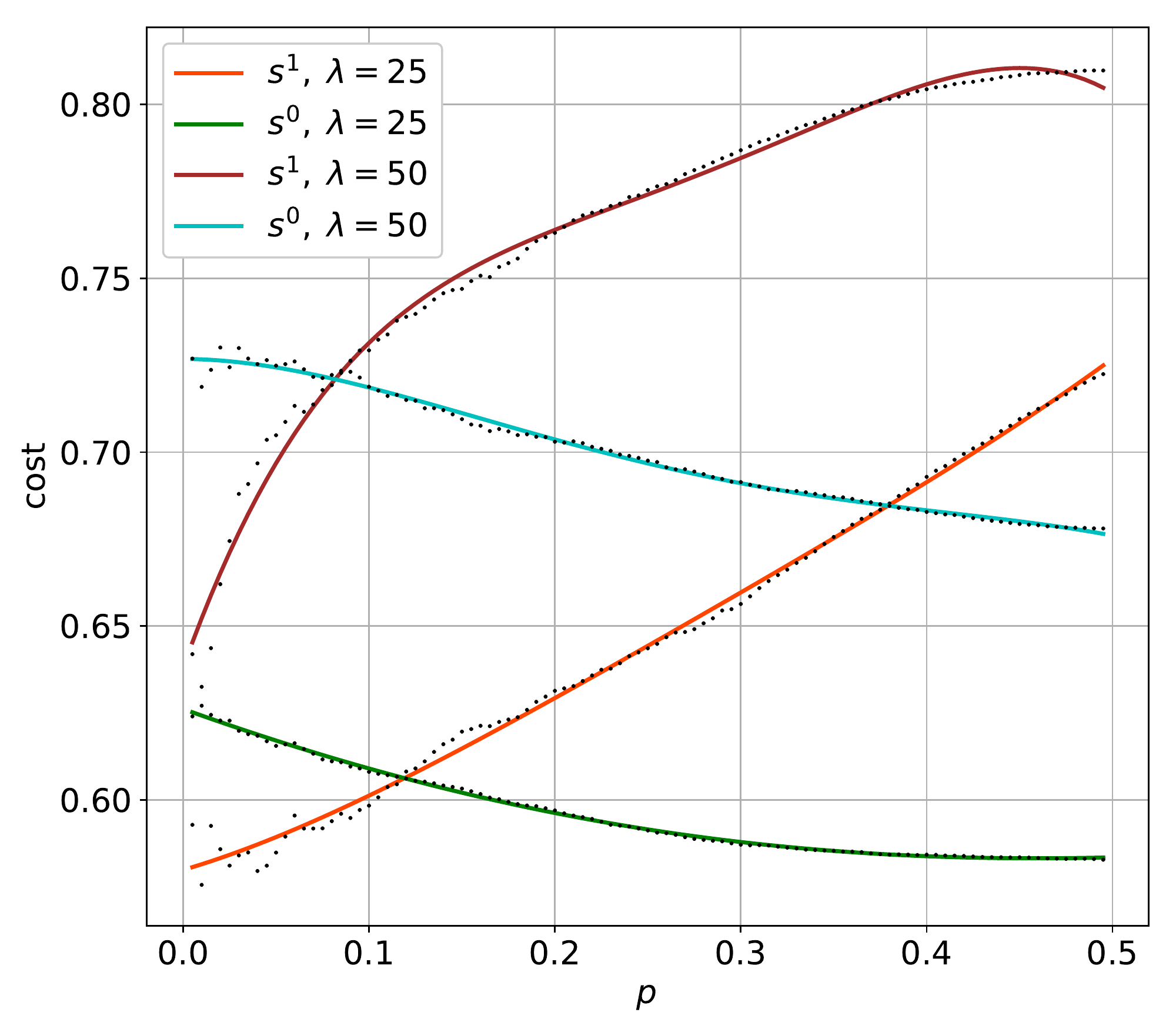} 
\includegraphics[width=7.4cm,height=6.6cm]{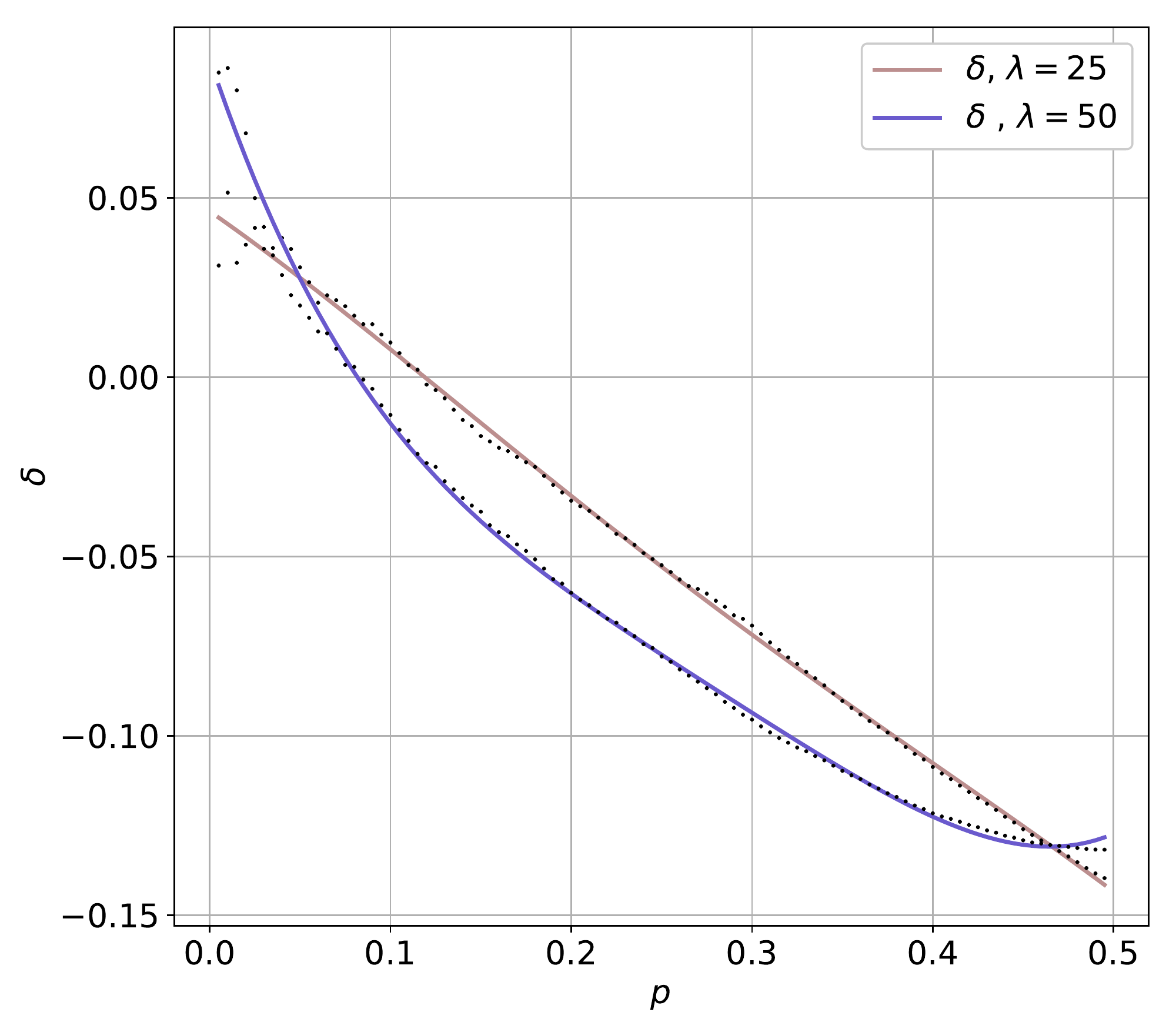}
\caption{Dotted lines are the raw data. Solid lines were fitted with least 
squares polynomials of four-degree. Costs (a) and gain of the 
strategy $\s^1$ over $\s^0$; (b) for $\alpha=0.01$.}
\label{a001}
\end{figure}

If the two curves on the graphs do not intersect, 
the equilibrium should be clearly at state~(2) or~(3), 
depending on which of the average costs is larger. 
If the two curves on the graphs intercept each other,
we will also have the intersection point as a Nash equilibrium
 candidate. We call~$\bar{\sss}$ the vector of strategies 
on equilibrium and~$\bar{p}$ the fraction of nodes that will 
issue transactions under~$\s^1$ when the system is in~$\bar{\sss}$. 
We define $p^{-}=\bar{p}-\frac{\gamma}{N}$ and
 $p^{+} = \bar{p}+\frac{\gamma}{N}$, meaning that~$p^{-}$ and~$p^{+}$
 will be deviations from~$\bar{p}$, that result from one node
 switching strategies, from~$\s^0$ to~$\s^1$ and from~$\s^1$ to~$\s^0$,
 respectively. We also define~$\bar{\sss}^{-}$ and~$\bar{\sss}^{+}$ 
as strategy vectors related to~$p^{-}$ and~$p^{+}$. 
Note on Figure~\ref{fig.stability} that this kind of Nash equilibrium
 candidate may not be a real equilibrium. 
In the first example (\ref{fig.stability}(a)), when the system is 
at point~$\bar{p}$ and a node switches strategies from~$\s^0$ 
to~$\s^1$ (moving from $\bar{p}$ to $p^+$), the cost actually decreases, so~$\bar{p}$ cannot be 
a Nash equilibrium. On the other hand, the second example
 (\ref{fig.stability}(b)) shows a Nash equilibrium 
at point~$\bar{p}$, since deviations to~$p^{-}$ and~$p^{+}$
 will increase costs. 

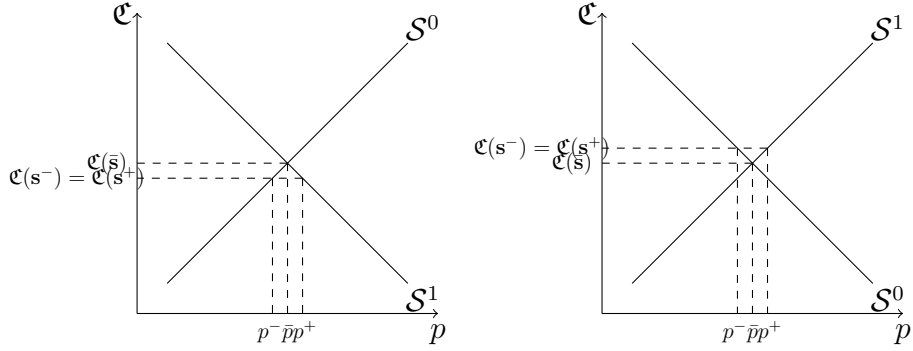
\begin{figure}
\begin{tikzpicture}[scale=0.4, auto]
\draw[arrows=->] (0,0)--(0,10);
\node[align=left] at (-0.5,10) {$\CC$};
\draw[arrows=->] (0,0)--(10,0);
\node[scale=1,align=left,below] at (10,0) {$p$};
\draw (1,1)--(9,9);
\node at (9.5,9.5) {$\s^0$};
\draw (1,9)--(9,1);
\node at (9.5,0.5) {$\s^1$};
\draw[dashed] (4.5,4.5)--(4.5,0);
\draw[dashed] (5,5)--(5,0);
\node[scale=0.7,align=left,below] at (5,0) {$p^-\bar{p}p^+$};
\draw[dashed] (5.5,4.5)--(5.5,0);
\draw[dashed] (5,5)--(0,5);
\node[scale=0.7,align=right] at (-1,5) {$\CC(\bar{\sss})$};
\draw[dashed] (5.5,4.5)--(0,4.5);
\node[scale=0.7,align=right] at (-2,4.5) {$\CC(\sss^{-})=\CC(\sss^{+})$};
\end{tikzpicture}
\begin{tikzpicture}[scale=0.4, auto]
\draw[arrows=->] (0,0)--(0,10);
\node[align=left] at (-0.5,10) {$\CC$};
\draw[arrows=->] (0,0)--(10,0);
\node[align=left, below] at (10,0) {$p$};
\draw (1,1)--(9,9);
\node at (9.5,9.5) {$\s^1$};
\draw (1,9)--(9,1);
\node at (9.5,0.5) {$\s^0$};
\draw[dashed] (4.5,5.5)--(4.5,0);
\draw[dashed] (5,5)--(5,0);
\node[scale=0.7,align=left,below] at (5,0) {$p^-\bar{p}p^+$};
\draw[dashed] (5.5,5.5)--(5.5,0);
\draw[dashed] (5,5)--(0,5);
\node[scale=0.7,align=right] at (-1,5) {$\CC(\bar{\sss})$};
\draw[dashed] (5.5,5.5)--(0,5.5);
\node[scale=0.7,align=right] at (-2,5.5) {$\CC(\sss^{-})=\CC(\sss^{+})$};
\end{tikzpicture}
\caption{Different Nash equilibrium points in systems 
with similar curves}
\label{fig.stability}
\end{figure}

Now, let us re-examine Figure~\ref{a001}(a). 
Here, the Nash equilibrium will occur at the point~$\bar{p}$, 
since we have a situation as on Figure~\ref{fig.stability}(b). 
That point is easily found at Figure~\ref{a001}(b), 
when $\delta=0$. Note that the Nash equilibrium for a larger~$\lambda$
 will be at a smaller~$\theta_0$ than the Nash equilibrium 
for a smaller~$\lambda$. This was already expected, since, 
for a larger~$\lambda h$, the tips will be naturally 
more ``overcrowded'', so the effect depicted at 
Figure~\ref{f_greedy} will be amplified. 
Thus, the Nash equilibrium for the higher~$\lambda h$ cases 
must occur with a smaller proportion of transactions 
issued with the pure strategy~$\s^1$.

Let us now again
consider the mixed strategy game. In the case when all 
the nodes are allowed to choose between the two pure strategies 
($\s^0$ and $\s^1$), the Nash equilibrium will be indeed 
at $\theta_0 = \bar{p}$ (as expected, since in this 
case $\gamma=1$). If just a fraction $\gamma = p/\theta >\bar{p}$ 
of the nodes is selfish, then the Nash equilibrium will occur when $\theta_0 = \bar{p}/\gamma$. 
Now, if $\gamma\leq \bar{p}$, the costs of the nodes will not 
coincide\footnote{that is the case for the range of studied 
parameters}. 
In that case, the average cost of transactions under~$\s^1$
will always be smaller than the average cost of transactions 
under~$\s^0$,  meaning that the Nash equilibrium will be met 
at $\theta_0 = 1$. Summing up, the Nash equilibrium $\theta_0$, 
in these cases, will be met at:
\[
\theta_0 = \min\{\bar{p}/\gamma,1\}.
\]

\begin{figure}
\hspace{4cm} (a) \hspace{7.4cm} (b)\\
\includegraphics[width=7.4cm,height=6.6cm]{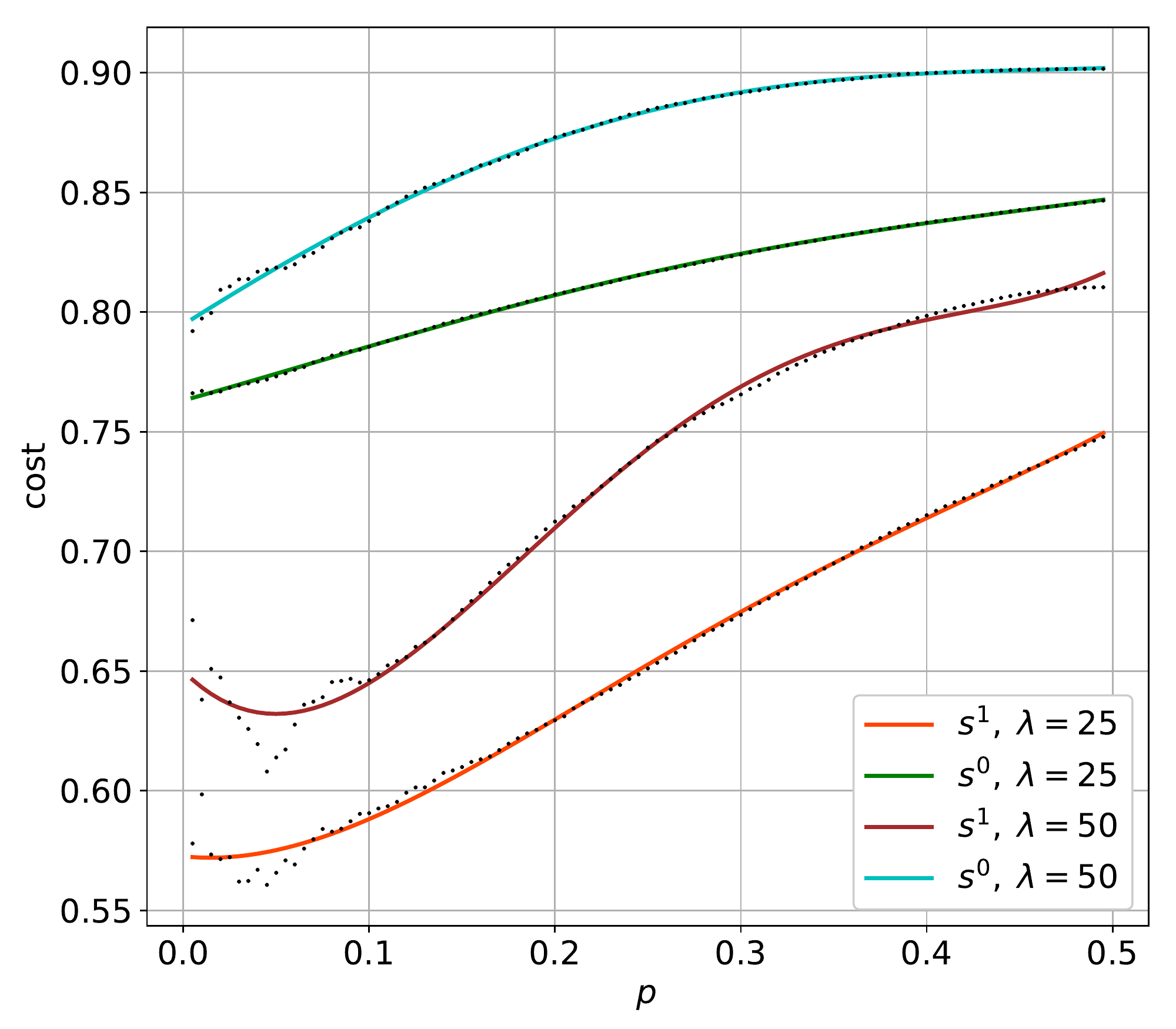} 
\includegraphics[width=7.4cm,height=6.6cm]{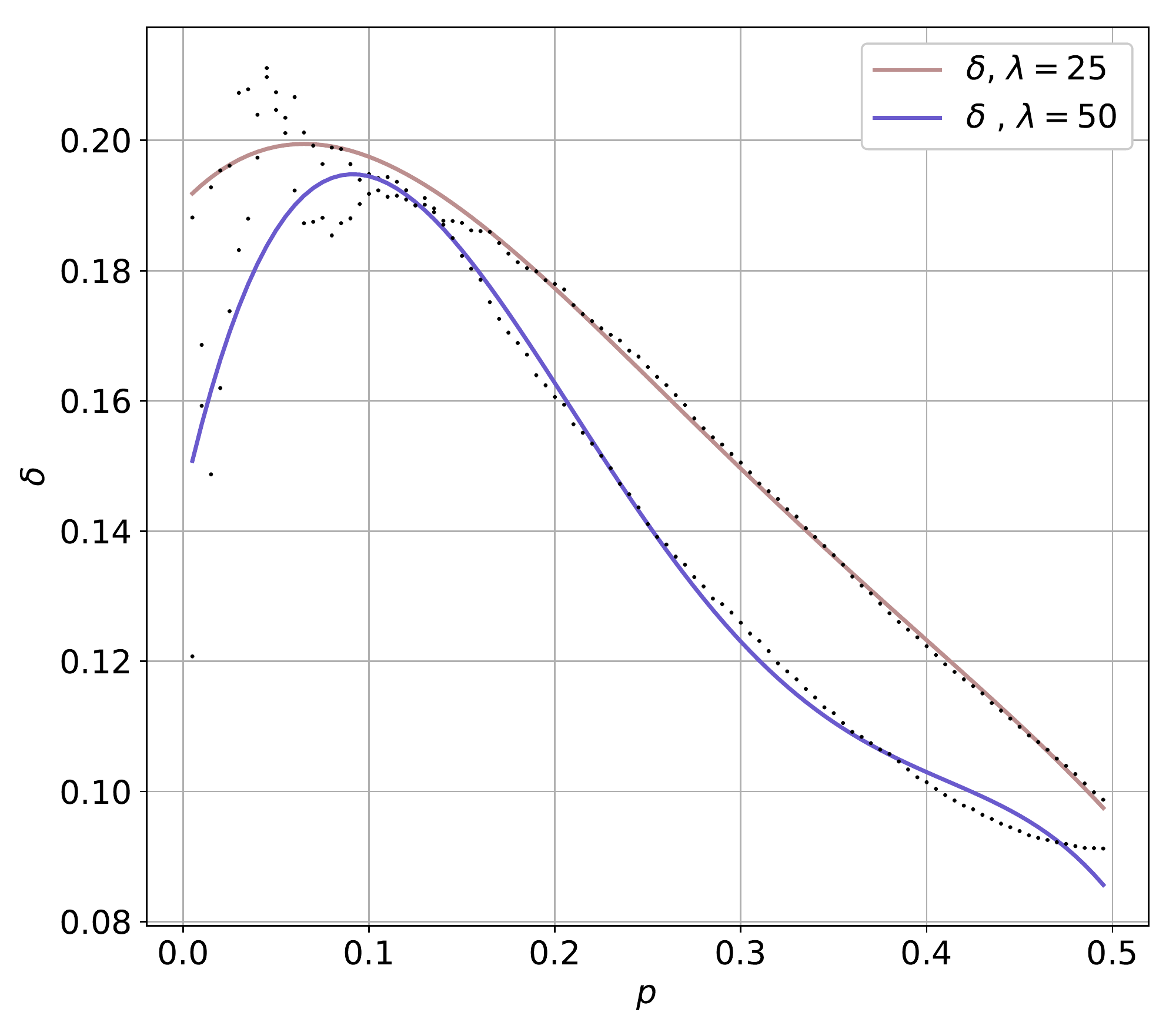}
\caption{Costs (a) and gain (b) of 
the strategy $\s^1$ over $\s^0$;  for $\alpha=0.5$.}
\label{a05}
\end{figure}
Figure~\ref{a05}(a) represents a typical graph of average costs 
of transactions under~$\s^0$ and transactions under~$\s^1$ as 
a function of fraction~$p$, for a higher~$\alpha$. 
In that case, even though the average costs of transactions 
under~$\s^0$ and transactions under~$\s^1$ do not coincide 
for any reasonable~$p$ (meaning that, here, the Nash equilibrium 
will be met at~$\theta=1$), the typical difference between 
the possible pure strategies (that, from now on, we will 
call absolute gains) will be low, as depicted on Figure~\ref{a05}(b).

\begin{figure}
\centering
\includegraphics[width=12cm,height=7cm]{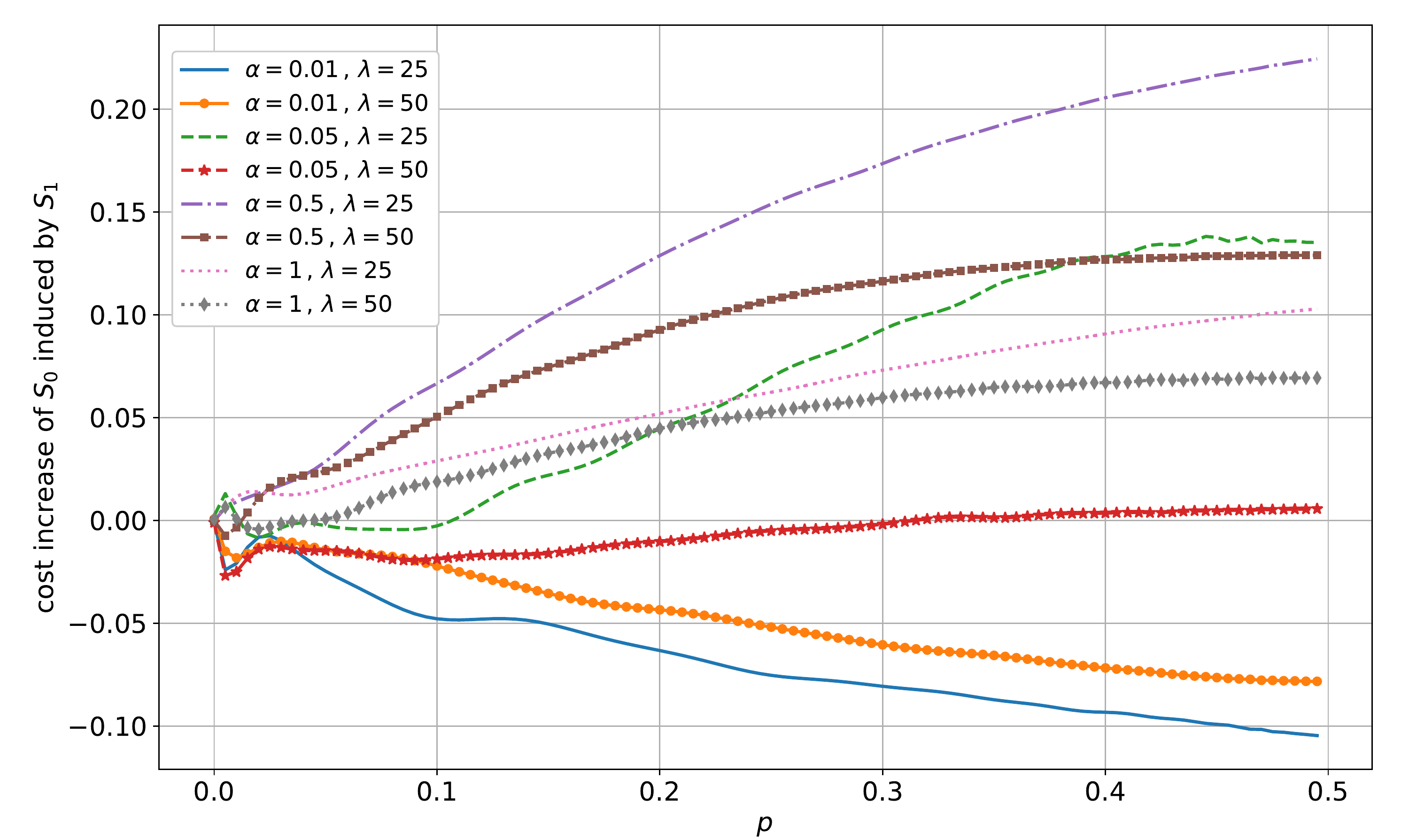}
\caption{Relative cost increase of the transactions issued by 
the strategy~$\s^0$
 induced by the presence of transactions emitted by 
the strategy~$\s^1$.}
\label{cost_increase}
\end{figure}
Figure~\ref{cost_increase} shows the average cost increase
 imposed on the nodes following the default strategy by the nodes
 issuing transactions under~$\s^1$. Let~$W(p)$ be the non-greedy 
nodes costs depicted in Figure~\ref{a05}(a). The cost increase is
 calculated as $(W(p)-W(0))/W(0)$, so it will be the relative
 difference of the cost of a non-selfish node in the presence of 
a fraction $p$ of selfish transactions and the cost of 
a non-selfish node when there are no selfish transactions at all. 
This difference is low, meaning that the presence of selfish nodes 
do not harm the efficiency of the non-selfish nodes. 
Note that this difference is small for all reasonable values of~$p$,
 but even for the larger simulated values of $p$, the difference is still less 
than~25\%. An interesting phenomenon, as shown in the same graph, 
is that the average cost increase imposed on the non-greedy nodes 
may actually be negative. For low values of $\alpha$, just a small
 fraction of the transactions under~$\s^0$ will share the approved 
tips with the transactions under~$\s^1$. 
This fraction of transactions will approve overcrowded tips, 
and will have their costs increased. All the other transactions 
 under~$\s^0$ will have their sites less crowded, since an increase 
in~$\s^1$ will mean a decrease in competition over these transactions.
 Finally, on average, the honest nodes will have their costs decreased.

\begin{figure}[!ht]
\hspace{4cm} (a) \hspace{7.4cm} (b)\\
\includegraphics[width=7.4cm,height=6.6cm]{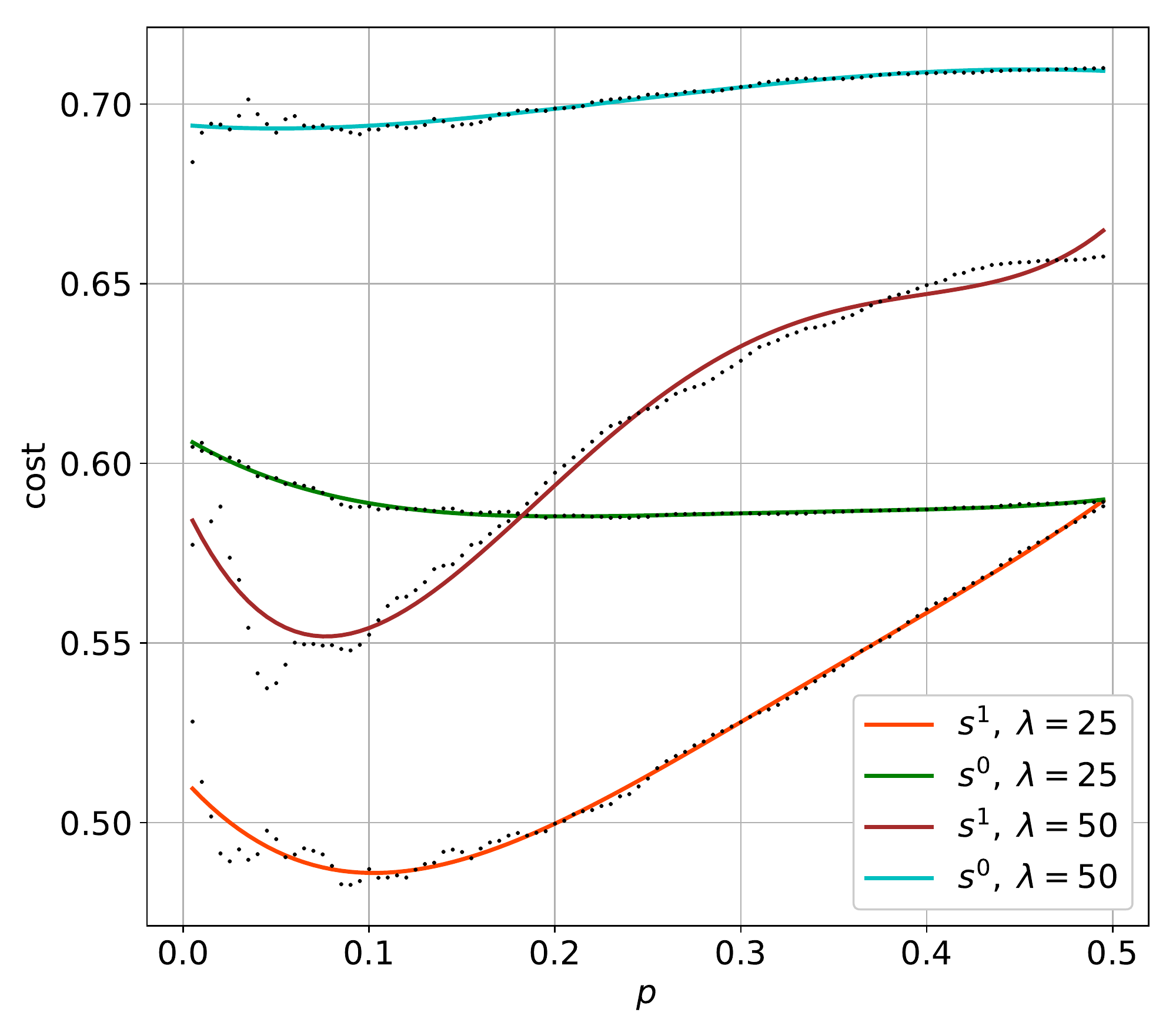} 
\includegraphics[width=7.4cm,height=6.6cm]{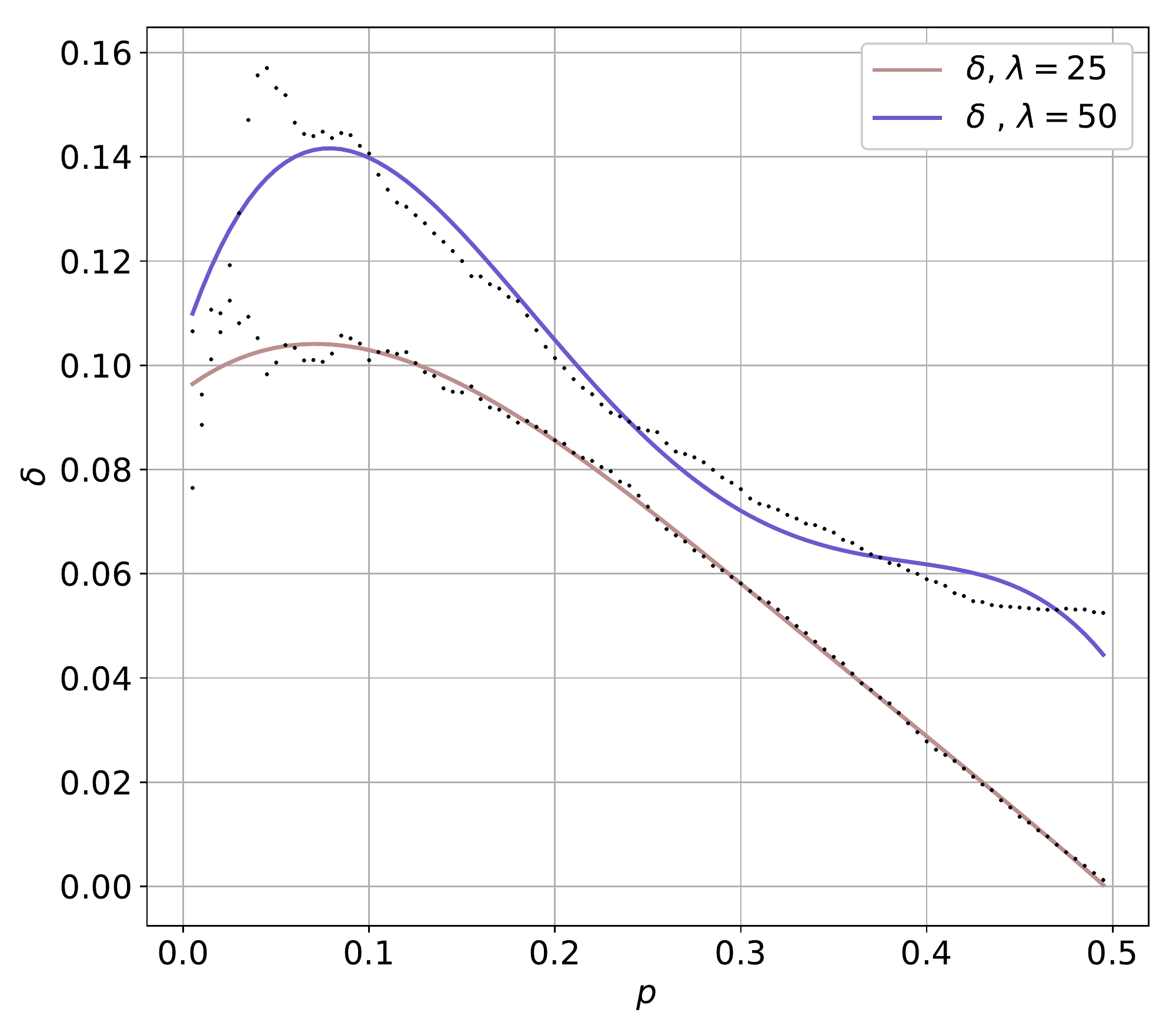}
\caption{Costs (a) and gain (b) of the strategy $\s^1$ 
over $\s^0$; for $\alpha=0.05$.}
\label{a005}
\end{figure}

\begin{figure}[!ht]
\hspace{4cm} (a) \hspace{7.4cm} (b)\\
\includegraphics[width=7.4cm,height=6.6cm]{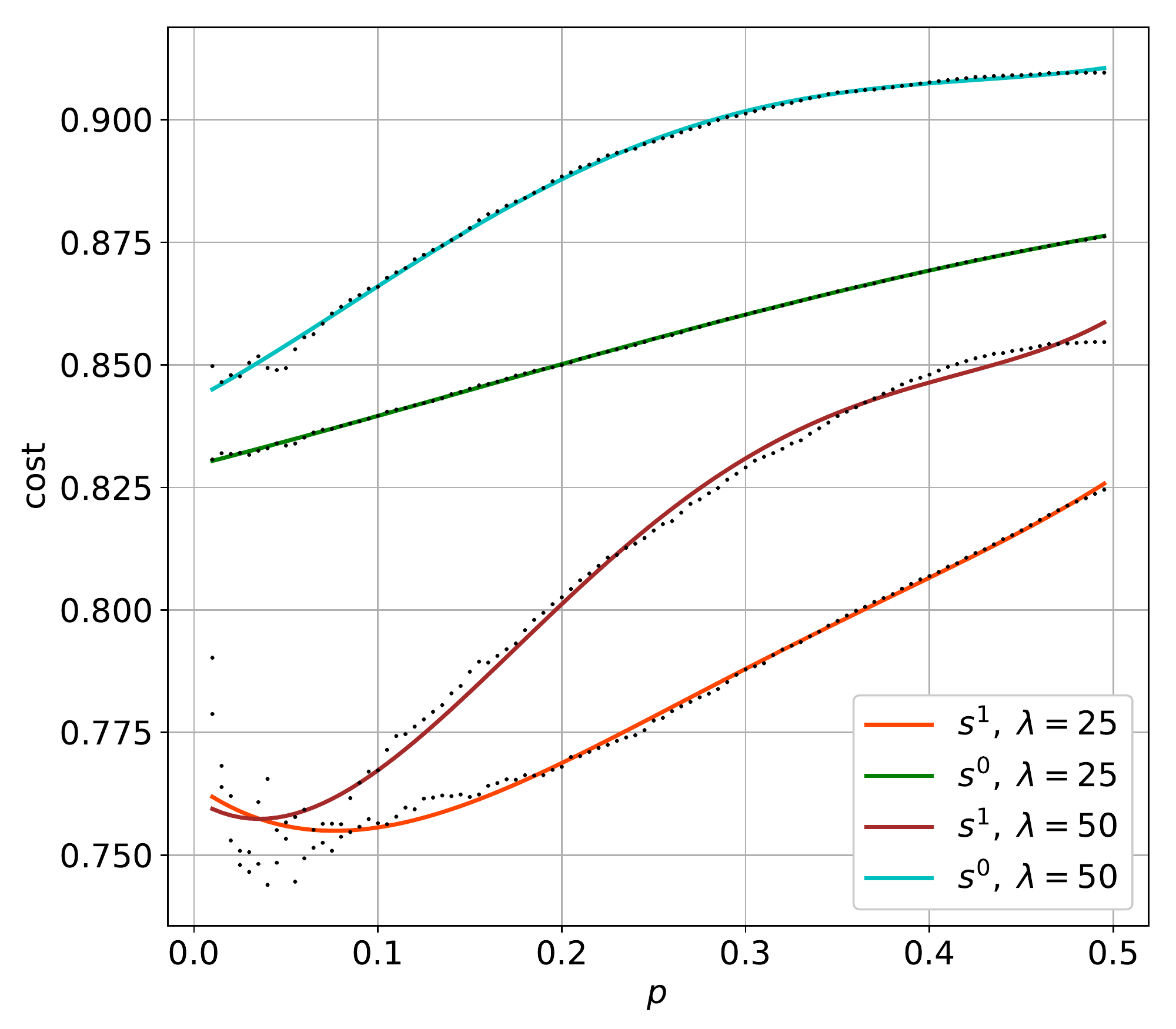} 
\includegraphics[width=7.4cm,height=6.6cm]{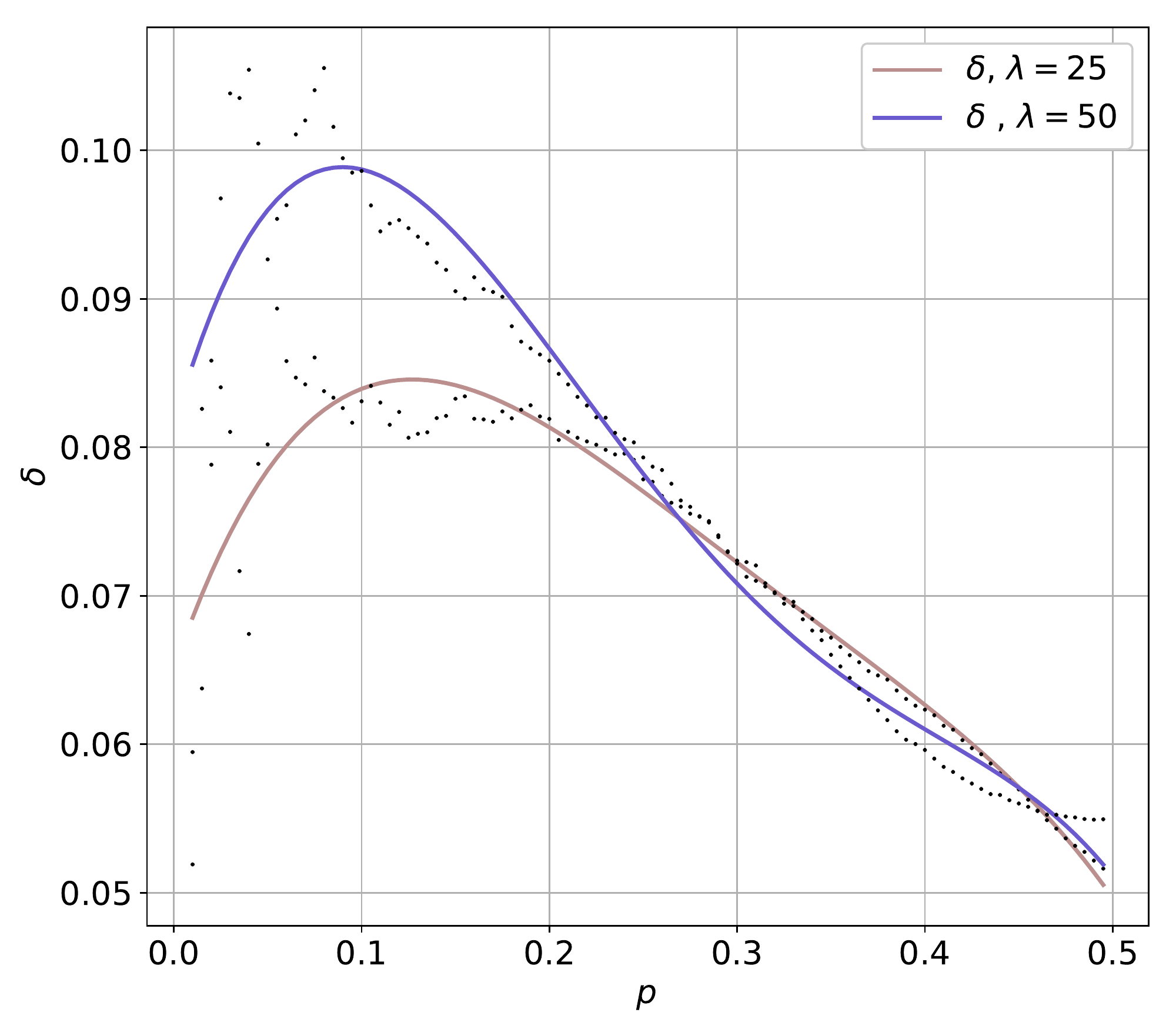}
\caption{Costs (a) and gain (b) of the strategy $\s^1$ 
over $\s^0$; for $\alpha=1$.}\label{a1}
\end{figure}

Figures~\ref{a005} and~\ref{a1} are analogous to the first figures,
 for other values of~$\alpha$ and~$\lambda h$; 
part~(a) of each figure represents average costs and 
part~(b) absolute gains.




\section{Conclusions and future work}
\label{s_conclusions}
 In the first part of this paper, we prove the existence of
 (``almost symmetric'') Nash equilibria for a game in 
the tangle where a part of players tries to optimise their 
attachment strategies. In the second part of the paper, 
we numerically determine, for a simple space strategy 
and some range of parameters, where these equilibria are located.
 
Our results show that the studied selfish strategy outperform 
 the non-selfish ones by a reasonable order of magnitude. 
The data show a 25\% (in the most extreme scenario) difference 
in the nodes gains, which in some situations, may be large enough.
 Nevertheless, the computational cost of a selfish strategy is
 intrinsically larger than the computational cost of the 
non-selfish strategies, since the selfish strategy uses 
the probability distribution of the tips, which is costly 
to calculate for a random walk with backtracking. They will also
 have to monitor the tangle, to know its parameters 
(like~$\lambda$, $h$ etc) and act accordingly. 
Also, even a extreme scenario, where almost half of
 the transactions were issued by a selfish node, 
is not enough to harm the non-selfish ones in a meaningful way. 

 On the other hand, our results raise further questions. 
The obtained data exhibit a deep qualitative dependence 
on the parameter~$\alpha$ of the simulation. 
This parameter is related to the randomness of the random walk:
a low $\alpha$ implies a high randomness; a higher~$\alpha$ 
implies a low randomness, meaning that the walk will be almost
 deterministic. Further simulations will be done in order to 
study the effect of that variable in the equilibria. 
 Also, we only studied equilibria for a given cost, relative to 
the probability of confirmation of the transactions in a certain
 interval of time. Since this probability depends heavily 
on the interval of time chosen (because the probability distribution
 of the confirmations is far from uniform), another time intervals, 
that will have another practical meaning, must be analysed.
 
 Finally, the equilibrium in the multidimensional strategy space 
should be studied in a more quantitative and analytic way, since 
it should depend strongly on~$\alpha$ and~$p$; and until now it
 was studied in just a narrow range of parameters. 
Further research will also be done in order to optimise the default 
tip selection strategy in a way that minimises this cost imposed by
 the selfish strategies. Through implementing research methods
 and techniques from the cross-reactive fields of measure theory, 
game theory, and graph theory, progress towards resolving 
the tangle-related open problems has been well under way and 
will continue to be under investigation.

As already mentioned, in this paper we consider 
only ``selfish'' players, i.e., those who only care about their own 
costs but still want to use the network 
in a legitimate way. We do not 
 consider at all the case when there are ``malicious'' 
ones, i.e., those who want to disrupt the network even
 at a cost to themselves.
We are going to treat several types of attacks against the 
network in the subsequent papers. 
Some preview of this ongoing work is 
available in~\cite{P18}.

\section*{Acknowledgements}
The authors thank Alon Gal, Gur Huberman, Bartosz Kumierz, 
John Licciardello, Andreas Penzkofer, Samuel Reid,
 and Clara Shikhelman for valuable comments and suggestions.
The authors are also grateful to the anonymous reviewers 
 for carefully reading the first version of this paper
and providing valuable comments and suggestions.

\end{document}